\documentclass[a4paper,12pt]{article}
\setlength{\textwidth}{16cm}
\setlength{\textheight}{23.5cm}
\setlength{\oddsidemargin}{0mm}
\setlength{\topmargin}{-1cm}

\usepackage{latexsym}
\usepackage{amsmath}
\usepackage{amssymb}
\usepackage{enumerate}
\usepackage{epic,eepic}

\usepackage{theorem}
\newtheorem{theorem}{Theorem}[section]
\newtheorem{proposition}[theorem]{Proposition}
\newtheorem{lemma}[theorem]{Lemma}
\newtheorem{corollary}[theorem]{Corollary}
\newtheorem{claim}[theorem]{Claim}

\theorembodyfont{\rmfamily}
\newtheorem{proof}{\textmd{\textit{Proof.}}}

\newtheorem{remark}[theorem]{Remark}

\newtheorem{definition}[theorem]{Definition}
\newtheorem{step}{Step}

\makeatletter

\@addtoreset{equation}{section}
\makeatother

\newcommand{\qedd}{\hfill \Box}
\newcommand{\ve}{\varepsilon}
\newcommand{\del}{\partial}
\newcommand{\lra}{\longrightarrow}
\newcommand{\grad}{\nabla\!\!_-}

\newcommand{\R}{\ensuremath{\mathbb{R}}}

\newcommand{\cC}{\ensuremath{\mathcal{C}}}
\newcommand{\cF}{\ensuremath{\mathcal{F}}}
\newcommand{\cK}{\ensuremath{\mathcal{K}}}
\newcommand{\cL}{\ensuremath{\mathcal{L}}}
\newcommand{\cP}{\ensuremath{\mathcal{P}}}
\newcommand{\cR}{\ensuremath{\mathcal{R}}}
\newcommand{\cS}{\ensuremath{\mathcal{S}}}
\newcommand{\cT}{\ensuremath{\mathcal{T}}}
\newcommand{\cV}{\ensuremath{\mathcal{V}}}
\newcommand{\bS}{\ensuremath{\mathbf{S}}}

\def\id{\mathop{\mathrm{id}}\nolimits}
\def\inj{\mathop{\mathrm{inj}}\nolimits}
\def\loc{\mathop{\mathrm{loc}}\nolimits}
\def\ac{\mathop{\mathrm{ac}}\nolimits}
\def\supp{\mathop{\mathrm{supp}}\nolimits}
\def\div{\mathop{\mathrm{div}}\nolimits}

\def\Ric{\mathop{\mathrm{Ric}}\nolimits}
\def\Ent{\mathop{\mathrm{Ent}}\nolimits}

\title{Non-contraction of heat flow on Minkowski spaces}
\author{Shin-ichi Ohta\thanks{Department of Mathematics, Kyoto University,
Kyoto 606-8502, Japan ({\sf sohta@math.kyoto-u.ac.jp}) \&
Max-Planck-Intitut f\"ur Mathematik, Vivatsgasse 7, 53111 Bonn, Germany.
Supported in part by the Grant-in-Aid for Young Scientists (B) 20740036.}
and Karl-Theodor Sturm\thanks{Institut f\"ur Angewandte Mathematik,
Universit\"at Bonn, Endenicher Allee 60, 53115 Bonn, Germany
({\sf sturm@uni-bonn.de}).}}
\date{}
\pagestyle{plain}

\begin{document}

\maketitle

\begin{abstract}
We study contractivity properties of gradient flows for functions
on normed spaces or, more generally, on Finsler manifolds.
Contractivity of the flows turns out to be equivalent to a new notion
of convexity for the functions.
This is different from the usual convexity along geodesics in non-Riemannian Finsler manifolds.
As an application, we show that the heat flow on Minkowski normed spaces
other than inner product spaces is not contractive with respect to
the quadratic Wasserstein distance.
\end{abstract}

\section{Introduction}

The main goal of this article is to prove that, for the heat flow on
a Minkowski normed space, no bound for the exponential growth
of the $L^2$-Wasserstein distance exists, unless the space
is an inner product space.
This is rather surprising, in particular, in view of the fact that the heat flow
is the gradient flow in the $L^2$-Wasserstein space $\cP_2$
of the relative entropy and the fact that
the latter is known to be a convex function on $\cP_2$.
In order to find an explanation for this phenomenon, we will
first of all study the contraction of the gradient flow of
a function on a Finsler manifold.
A Finsler manifold is a manifold carrying a Minkowski norm on
each tangent space, instead of an inner product for Riemannian manifolds.
A Minkowski norm is a generalization of usual norms,
and is not necessarily centrally symmetric.
We will always assume that a Minkowski norm is strongly convex
(and in particular strictly convex, see Subsection~\ref{ssc:Mpre} for the definition).

In Riemannian manifolds, given $K \in \R$, it is well-known that
the $K$-convexity of a function $f$ along geodesics $\gamma$
(i.e., $(f \circ \gamma)'' \ge K|\dot{\gamma}|^2$ in the weak sense)
implies the \emph{$K$-contraction} of the gradient flow of $f$, namely
\[ d\big( \xi(t),\zeta(t) \big) \le e^{-Kt} d\big( \xi(0),\zeta(0) \big) \]
holds for all $t \ge 0$ and $\xi,\zeta$ solving $\dot{\xi}(t)=\nabla(-f)(\xi(t))$,
$\dot{\zeta}(t)=\nabla(-f)(\zeta(t))$.
This is obtained via the first variation formulas for the distance
$d(\xi(t),\zeta(t))$ and the function $f$.
In Finsler manifolds or even in strictly convex normed spaces, however,
it has been unclear whether the gradient flows of convex functions
are contractive (cf.\ \cite[Introduction]{AGS}).
To avoid trivial counter-examples, of course strict convexity must be imposed
(see Remark~\ref{rm:linf}).

The point is that, although the aforementioned first variation formulas do exist
also in the Finsler setting, they use different approximate inner products
(see the paragraph following Definition~\ref{df:bicon}).
Keeping this in mind, we introduce a new notion of convexity,
called the \emph{skew convexity}, which is equivalent to the usual convexity
in Riemannian manifolds.
We show that the $K$-skew convexity of a function on a Finsler manifold is equivalent
to the $K$-contraction of its gradient flow (Theorem~\ref{th:equiv}).
A difference between the skew convexity and the convexity along geodesics
is observed by considering distance functions.
In Minkowski spaces, the squared norm is always $2$-skew convex,
while in general it is only $K$-convex for some $K \ge 0$.
We also construct an explicit example of a convex function which is not $0$-skew convex.
This negatively answers the above question in \cite{AGS} (see Section~\ref{sc:dist} for details).

In the second part of the article, we apply our technique to
the heat flow on Minkowski spaces.
Due to the celebrated work of Jordan et al~\cite{JKO},
the heat flow on Euclidean spaces can be regarded as the gradient flow
of the relative entropy in the $L^2$-Wasserstein space.
This provides a somewhat geometric interpretation of the non-expansion
($0$-contraction) of heat flow with respect to the Wasserstein distance,
as the relative entropy is known to be convex along Wasserstein geodesics
(also called \emph{displacement convex}, \cite{Mc}).
More generally, on Riemannian manifolds, both the $K$-convexity
of the relative entropy and the $K$-contraction of heat flow are
equivalent to the lower Ricci curvature bound $\Ric \ge K$ (\cite{vRS}).
Note that the Wasserstein space over a Riemannian manifold
possesses a sort of Riemannian structure, for which the first variation formulas
are available (see \cite{Ot}, \cite{AGS}, \cite{Vi}, \cite{Er}).
We also remark that Gigli~\cite{Gi} recently showed the uniqueness
of the gradient flow of the relative entropy (with respect to a probability measure)
for metric measure spaces such that the relative entropy is
$K$-convex for some $K \in \R$, without relying on the contractivity.

In our previous works~\cite{Oint}, \cite{OS},
we have extended the equivalence between the Ricci curvature bound
and the convexity of the relative entropy, as well as
the identification of (nonlinear) heat flow with the gradient flow
of the relative entropy with respect to the reverse Wasserstein distance,
to Finsler manifolds.
In particular, the relative entropy on any Minkowski space is convex
(see also \cite[page 908]{Vi}).
Then it is natural to ask whether the heat flow on Minkowski spaces
is contractive or not (see also the fourth comment in \cite[Section~5]{Gi}).
Our main result gives a complete answer to this question.

\begin{theorem}\label{th:nonc}
The heat flow on a Minkowski normed space $(\R^n,\|\cdot\|)$ is not $K$-contractive
with respect to the reverse $L^2$-Wasserstein distance
for any $K \in \R$, unless $(\R^n,\|\cdot\|)$ is an inner product space.
\end{theorem}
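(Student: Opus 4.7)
The strategy is to apply the equivalence of Theorem~\ref{th:equiv} to convert non-contraction of the heat flow into non-skew-convexity of the relative entropy, and then to construct explicit Wasserstein curves witnessing the failure. By \cite{Oint,OS}, the heat flow on $(\R^n,\|\cdot\|)$ is the gradient flow of the relative entropy $\Ent$ on the Wasserstein space $\cP_2$ equipped with the reverse $L^2$-Wasserstein distance; hence Theorem~\ref{th:equiv} tells us that $K$-contraction of the heat flow in this distance is equivalent to $K$-skew convexity of $\Ent$ on $\cP_2$. It therefore suffices to refute $K$-skew convexity of $\Ent$ for every $K\in\R$, under the standing hypothesis that $\|\cdot\|$ is not an inner product.

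To produce such a refutation I would work with Wasserstein geodesics whose Kantorovich potentials have essentially constant gradients on a large region. Concretely, fix a smooth probability density $\rho$ which is constant on a large ball $B\subset\R^n$, and consider two potentials $\phi_0,\phi_1$ with $\nabla\phi_0\equiv v$ and $\nabla\phi_1\equiv w$ on $B$ for vectors $v,w\in\R^n$. Expanding the skew-convexity inequality for $\Ent$ along the induced Wasserstein geodesic via the Finsler Otto calculus of \cite{Ot,AGS,Er,OS}, the entropic contribution gives a term of fixed size (depending on $\rho$ but not on $v,w$), while the remaining terms are quadratic in $(v,w)$ and involve the two \emph{distinct} approximate inner products that $\tfrac12\|\cdot\|^2$ induces at $v$ and at $w$. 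This is the same asymmetry that underlies the counterexample of Section~\ref{sc:dist}: the two approximate inner products that appear at the foot and the endpoint of a curve in the skew-convexity condition are genuinely different as soon as $\|\cdot\|$ fails to be Euclidean.

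The main obstacle is to amplify this finite deficit into a violation that defeats every $K\in\R$ simultaneously. Here I would exploit the positive $1$-homogeneity of the Minkowski norm: replacing $(v,w)$ by $(\lambda v,\lambda w)$ for large $\lambda$ and enlarging $B$ accordingly scales the skew-convexity defect by $\lambda^2$, while the $K$-correction, being proportional to the squared Wasserstein distance, is also of order $\lambda^2$ but with a coefficient that is fixed once the direction $(v,w)$ is chosen. Selecting $v$ and $w$ so that the two approximate inner products genuinely differ---possible precisely when $\|\cdot\|$ does not come from an inner product---yields a deficit in the skew-convexity inequality that cannot be absorbed by any fixed $K$. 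Reading Theorem~\ref{th:equiv} in the contrapositive direction then converts this infinitesimal violation into explicit initial measures $\mu_0,\nu_0$ and a time $t>0$ at which the reverse $L^2$-Wasserstein distance between their heat-flow evolutions exceeds $e^{-Kt}$ times its initial value, completing the proof.
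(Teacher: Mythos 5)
Your high-level strategy matches the paper's: reduce $K$-contraction to a skew-convexity statement for the relative entropy (the paper needs a Wasserstein analogue of Theorem~\ref{th:equiv}, namely Proposition~\ref{pr:Wcont}, which requires proving the first variation formula for $W_2$ along heat flow --- this is genuine work that you elide), and then exhibit an explicit violation exploiting the mismatch between the approximate inner products on the two sides of~\eqref{eq:bicon}. However, your concrete construction has a gap that I believe is fatal as stated.

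You propose to take $\rho$ essentially constant on a large ball and use transport maps with ``constant gradient $\equiv v$'' (and separately $\equiv w$), i.e.\ translations. But the skew-convexity inequality~\eqref{eq:omega} is along a \emph{single} McCann geodesic, whose tangent field is $\dot\omega_s(T_s(x))=\nabla\varphi(x)$, transported along the geodesic; you cannot independently prescribe the velocity directions $\dot\omega_0$ and $\dot\omega_1$ to be two unrelated vectors $v$ and $w$. If $\nabla\varphi\equiv v$ is constant then the geodesic is a pure translation, and because the Lebesgue measure, the norm, and hence the heat flow are translation-invariant, the left-hand side of~\eqref{eq:omega} vanishes identically (a change of variables $y\mapsto y-v$ shows the two integrals are equal). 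This refutes $K$-skew convexity only for $K>0$, which is vacuous --- translations give $W_2(\mu_t,\nu_t)\equiv W_2(\mu_0,\nu_0)$, consistent with $0$-contraction on \emph{every} Minkowski space. The homogeneous scaling $v\mapsto\lambda v$ you invoke cannot rescue this: the entropic side of~\eqref{eq:omega} is $1$-homogeneous in the geodesic speed while $-KW_2^2$ is $2$-homogeneous, so you do not get a $\lambda^2$-deficit that defeats $K\le 0$.

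What the paper actually does is qualitatively different: it uses the \emph{radial dilation} geodesic $\omega_s=(\cF_{s/T})_\sharp\mu$ with $\cF_{s/T}(x)=(1-s/T)x$, whose tangent vector $\dot\omega_s(x)=-x/(T-s)$ \emph{changes direction across the support} of $\rho$. This is essential, because the anisotropy of $\|\cdot\|$ can only be detected if the geodesic velocity sweeps out different directions. The proof then reduces to showing that the quantity $\Theta(\rho)$ in~\eqref{eq:nonc} can be made positive and then arbitrarily large. Positivity is the heart of the matter and requires the density $\hat\rho$ supported on a triangle tangent to the unit ball (Step~0) plus a genuinely nontrivial geometric characterization of inner products among Minkowski norms (Claims~\ref{cl:nonc} and~\ref{cl:tri}); your proposal contains nothing playing this role, and it is not obvious that a ``ball plus translation'' construction can be massaged into it. Arbitrary largeness comes from the dilation $\rho_\ve(x)=\ve^{-n}\rho(\ve^{-1}x)$ (Step~3), whose scaling is driven by the $1$-homogeneity of $D(-f)$ against the decay of $\int\|-x\|^2\rho_\ve\,dx$ --- not by scaling the geodesic speed. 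In short, your diagnosis of where the obstruction lives is correct, but the construction you propose probes only the translation-invariant part of the geometry and therefore cannot see the failure of contraction for $K\le 0$.
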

Our proof uses a geometric characterization of inner products among
Minkowski norms (Claims~\ref{cl:nonc}, \ref{cl:tri}).

Theorem~\ref{th:nonc} means that the Wasserstein contraction
implies that the space must be Riemannian.
This makes a contrast with the aforementioned fact that the convexity of
the relative entropy (more generally, the curvature-dimension condition)
works well for general Finsler manifolds.
Among other characterizations of lower Ricci curvature bounds for Riemannian manifolds,
we recently verified that the Bochner-Weitzenb\"ock formula makes sense
for general Finsler manifolds (\cite{OS2}).

The article is organized as follows.
After preliminaries for Minkowski and Finsler geometries,
we introduce the skew convexity in Section~\ref{sc:def},
and study the skew convexity of squared norms of Minkowski spaces in Section~\ref{sc:dist}.
In Section~\ref{sc:heat}, we discuss the heat flow on Minkowski spaces.
We give a detailed explanation on how to identify it with the gradient flow
of the relative entropy, because some results in \cite{OS} are not
directly applicable to noncompact spaces.
Section~\ref{sc:nonc} is devoted to a proof of Theorem~\ref{th:nonc}.
Finally, we consider the skew convexity of distance functions
on Finsler manifolds in Appendix.

\section{Preliminaries}\label{sc:prel}

We review the basics of Minkowski spaces and Finsler manifolds.
We refer to \cite{BCS} and \cite{Shlec} for Finsler geometry,
and to \cite[Chapter~14]{BCS} for Minkowski spaces.

\subsection{Minkowski spaces}\label{ssc:Mpre}

In this article, a \emph{Minkowski norm} will mean a nonnegative function
$\|\cdot\|:\R^n \lra [0,\infty)$ satisfying the following conditions.
\begin{enumerate}[(1)]
\item (\emph{Positive homogeneity})
$\|cx\|=c\|x\|$ holds for all $x \in \R^n$ and $c>0$.

\item (\emph{Strong convexity})
The function $\|\cdot\|^2/2$ is twice differentiable on $\R^n \setminus \{0\}$,
and the symmetric matrix
\begin{equation}\label{eq:g_ij}
\big( g_{ij}(x) \big)_{i,j=1}^n :=\bigg( \frac{1}{2}
 \frac{\del^2(\|\cdot\|^2)}{\del x^i \del x^j}(x) \bigg)_{i,j=1}^n
\end{equation}
is measurable in $x$ and uniformly elliptic in the sense that there are constants
$\lambda, \Lambda>0$ such that
\begin{equation}\label{eq:ell}
\lambda \sum_{i=1}^n (a^i)^2 \le \sum_{i,j=1}^n g_{ij}(x)a^ia^j
 \le \Lambda \sum_{i=1}^n (a^i)^2
\end{equation}
holds for all $x \in \R^n \setminus \{0\}$ and $(a^i) \in \R^n$
(in particular, $\|x\|>0$ for all $x \neq 0$).
\end{enumerate}

We call $(\R^n,\|\cdot\|)$ a \emph{Minkowski $($normed$)$ space}.
We remark that the strong convexity implies the strict convexity,
i.e., $\|x+y\|<\|x\|+\|y\|$ unless $x$ and $y$ are linearly dependent.
Note that the homogeneity is imposed only in positive direction,
so that $\|\!-x\| \neq \|x\|$ is allowed.
We also remark that the function $\|\cdot\|^2/2$ is twice differentiable
at the origin only in inner product spaces.
Given $x \in \R^n \setminus \{0\}$,
the matrix \eqref{eq:g_ij} defines the inner product $g_x$ of $\R^n$ by
\begin{equation}\label{eq:g_x}
g_x\big( (a^i),(b^j) \big) :=\sum_{i,j=1}^n g_{ij}(x) a^i b^j.
\end{equation}
This is the best approximation of the norm $\|\cdot\|$ in the direction $x$
in the sense that the unit sphere of $g_x$ is tangent
to that of $\|\cdot\|$ at $x/\|x\|$ up to the second order (Figure~1).
In particular, we have $g_x(x,x)=\|x\|^2$.
If the original norm comes from an inner product,
then $g_x$ coincides with it for all $x$.

\begin{center}
\begin{picture}(400,200)
\put(180,10){Figure~1}

\put(200,30){\vector(0,1){160}}
\put(80,110){\vector(1,0){240}}

\qbezier(250,110)(245,80)(210,70)
\qbezier(210,70)(180,65)(140,80)
\qbezier(140,80)(110,90)(110,110)
\qbezier(110,110)(110,145)(130,160)
\qbezier(130,160)(180,195)(230,150)
\qbezier(230,150)(247,135)(250,110)

\thicklines
\put(200,110){\ellipse{100}{50}}
\put(200,110){\vector(1,0){50}}

\put(205,115){$x/\|x\|$}
\put(140,139){$g_x(\cdot,\cdot)=1$}
\put(105,175){$\|\cdot\|=1$}

\end{picture}
\end{center}

We define the \emph{$2$-uniform convexity} and
\emph{smoothness constants} $\cC, \cS \in [1,\infty)$
as the least constants satisfying
\begin{align*}
\bigg\| \frac{x+y}{2} \bigg\|^2
&\le \frac{1}{2}\|x\|^2 +\frac{1}{2}\|y\|^2 -\frac{1}{4\cC^2}\|x-y\|^2, \\
\bigg\| \frac{x+y}{2} \bigg\|^2
&\ge \frac{1}{2}\|x\|^2 +\frac{1}{2}\|y\|^2 -\frac{\cS^2}{4}\|x-y\|^2
\end{align*}
for all $x,y \in \R^n$.
In other words, $\cC^{-2}$ and $\cS^2$ are the moduli of convexity
and concavity of $\|\cdot\|^2/2$, respectively.
Thanks to \eqref{eq:ell}, $\cC<\infty$ and $\cS<\infty$ hold.
Indeed, we know
\begin{equation}\label{eq:CS}
\cC=\sup_{x,y \in \R^n \setminus \{0\}} \frac{\|y\|}{g_x(y,y)^{1/2}}, \qquad
 \cS=\sup_{x,y \in \R^n \setminus \{0\}} \frac{g_x(y,y)^{1/2}}{\|y\|}
\end{equation}
(cf.\ \cite[Proposition~4.6]{Ouni}).
Note also that $\cC=1$ or $\cS=1$ holds if and only if the norm
is an inner product.

Denote by $\|\cdot\|_*$ the dual norm of $\|\cdot\|$.
Then the \emph{Legendre transform} $\cL:(\R^n,\|\cdot\|) \lra (\R^n,\|\cdot\|_*)$
associates $x$ with $\cL(x)$ satisfying $\|\cL(x)\|_*=\|x\|$
and $[\cL(x)](x)=\|x\|^2$.
Note that \eqref{eq:ell} ensures that $\cL(x)$ is indeed uniquely determined.
Moreover, $\cL(x)=(\cL_j(x))_{j=1}^n$ can be explicitly written as
\begin{equation}\label{eq:Leg}
\cL_j(x) =\frac{1}{2}\frac{\del(\|\cdot\|^2)}{\del x^j}(x) =\sum_{i=1}^n g_{ij}(x)x^i.
\end{equation}
The Legendre transform of inverse direction
$\cL^*:(\R^n,\|\cdot\|_*) \lra (\R^n,\|\cdot\|)$ is nothing but
the inverse map $\cL^*=\cL^{-1}$ by definition.
For a function $f:\R^n \lra \R$ and $x \in \R^n$ at where $f$ is differentiable,
we define the \emph{gradient vector} of $f$ at $x$ by
$\nabla f(x):=\cL^*(Df(x)) \in T_x\R^n$ (identified with $\R^n$).

\begin{remark}\label{rm:Mink}
We need the strong convexity of the norm to formulate and investigate the skew convexity
of functions as well as the heat equation, while the characterization of inner products
(Claim~\ref{cl:tri}) is valid among merely `convex' Minkowski norms
(i.e., its closed unit ball is a closed convex set containing the origin as an inner point).
In addition, the strict convexity will be a necessary condition when one studies
the contractivity of gradient flows (see Remark~\ref{rm:linf}).
\end{remark}

\subsection{Finsler manifolds}\label{ssc:Fpre}

Let $M$ be a connected $C^{\infty}$-manifold without boundary.
A nonnegative function $F:TM \lra [0,\infty)$ is called
a \emph{$C^{\infty}$-Finsler structure} if it is $C^{\infty}$
on $TM \setminus \{0\}$ ($\{0\}$ stands for the zero section)
and if $F|_{T_xM}$ is a Minkowski norm for all $x \in M$.
We call $(M,F)$ a $C^{\infty}$-Finsler manifold.
(We will consider only $C^{\infty}$-structures for simplicity.)

For each $v \in T_xM \setminus \{0\}$, we define the inner product
$g_v$ on $T_xM$ according to \eqref{eq:g_x}.
That is to say, given a local coordinate $(x^i)_{i=1}^n$
on an open set $U$ containing $x$, we consider the coordinate
of $T_xM$ as $v=\sum_{i=1}^n v^i (\del/\del x^i)|_x$ and define
\[ g_{ij}(v) :=\frac{1}{2} \frac{\del^2(F^2)}{\del v^i \del v^j}(v), \quad
 g_v\bigg( \sum_{i=1}^n a_i \frac{\del}{\del x^i}\Big|_x
 ,\sum_{j=1}^n b_j \frac{\del}{\del x^j}\Big|_x \bigg)
 :=\sum_{i,j=1}^n a_i b_j g_{ij}(v). \]
We denote by $\cC(x)$ and $\cS(x)$ the $2$-uniform convexity and
smoothness constants of $F|_{T_xM}$.
For a function $f:M \lra \R$ differentiable at $x \in M$,
define the gradient vector of $f$ at $x$ by $\nabla f(x):=\cL^*(Df(x))$
via the Legendre transform $\cL^*:T_x^*M \lra T_xM$.

The \emph{distance} from $x$ to $y$ is naturally defined as
$d(x,y):=\inf_{\gamma}\int_0^1 F(\dot{\gamma}) \,dt$, where
$\gamma:[0,1] \lra M$ runs over all differentiable curves from $x$ to $y$.
We remark that $d$ is nonsymmetric in general, namely $d(y,x) \neq d(x,y)$
may happen.
A \emph{geodesic} $\gamma:[0,l] \lra M$ is a locally length minimizing curve
of constant speed (i.e., $F(\dot{\gamma})$ is constant).
We say that $(M,F)$ is \emph{forward complete} if any geodesic
$\gamma:[0,l] \lra M$ is extended to a geodesic $\gamma:[0,\infty) \lra M$.
Then, for any $x,y \in M$, there is a minimal geodesic from $x$ to $y$
by the Hopf-Rinow theorem (\cite[Theorem~6.6.1]{BCS}).

Along a geodesic $\gamma:[0,l] \lra M$, $\gamma(s)$ with $s \in (0,l)$
is called a \emph{cut point} of $\gamma(0)$ if $\gamma|_{[0,s]}$ is minimal
and if $\gamma|_{[0,s+\ve]}$ is not minimal for any $\ve>0$.
Suppose that $\gamma(s)$ is not a cut point of $\gamma(0)$ for all $s \in (0,l]$,
and let $\xi$ and $\zeta$ be differentiable curves with
$\xi(0)=\gamma(0)$ and $\zeta(0)=\gamma(l)$.
Then we have the following first variation formula (\cite[Exercise~5.2.4]{BCS}):
\begin{equation}\label{eq:1vf}
\lim_{t \downarrow 0} \frac{d(\xi(t),\zeta(t))-d(\xi(0),\zeta(0))}{t}
 =\frac{g_{\dot{\gamma}(l)}(\dot{\gamma}(l),\dot{\zeta}(0))
 -g_{\dot{\gamma}(0)}(\dot{\gamma}(0),\dot{\xi}(0))}
 {l^{-1} \cdot d(\gamma(0),\gamma(l))}.
\end{equation}
As usual in discussing the contraction property,
this formula will play a vital role.

It is sometimes useful to consider the \emph{reverse} Finsler structure
$\overleftarrow{F}(v):=F(-v)$.
We will put an arrow $\leftarrow$ on those associated with $\overleftarrow{F}$,
for example, $\overleftarrow{d}\!(x,y)=d(y,x)$ and
$\overleftarrow{\nabla}f=-\nabla(-f)$.

\section{Skew convex functions}\label{sc:def}

We introduce the skew convexity of functions on a $C^{\infty}$-Finsler manifold
$(M,F)$, and will see that it is equivalent to the contractivity
of their gradient flows.
Although we shall work with $C^1$-functions for simplicity,
the same technique is also applicable to other classes of functions
(e.g., locally semi-convex functions, see Remark~\ref{rm:bicon} below).

Let us begin with the standard notion of convexity along geodesics.
A function $f:M \lra [-\infty,\infty]$ is said to be \emph{$K$-convex}
(or \emph{geodesically $K$-convex}) for $K \in \R$ if
\[ f\big( \gamma(t) \big) \le (1-t)f\big( \gamma(0) \big)
 +tf\big( \gamma(1) \big) -\frac{K}{2}(1-t)td\big( \gamma(0),\gamma(1) \big)^2 \]
holds for all geodesics $\gamma:[0,1] \lra M$ and $t \in [0,1]$.
If $f$ is $C^2$, then this is equivalent to
$\del^2[f \circ \gamma]/\del t^2 \ge Kd(\gamma(0),\gamma(1))^2$
and to
\[ \frac{\del}{\del t}\Big[ Df\big( \gamma(t) \big) \big( \dot{\gamma}(t) \big) \Big]
 \ge Kd\big( \gamma(0),\gamma(1) \big)^2. \]
Now, instead of $Df(\gamma)(\dot{\gamma})
 =-g_{\nabla(-f)(\gamma)}(\nabla(-f)(\gamma),\dot{\gamma})$
in the left hand side, we employ $-\cL(\dot{\gamma})(\nabla(-f)(\gamma))
 =-g_{\dot{\gamma}}(\nabla(-f)(\gamma),\dot{\gamma})$
for the skew convexity.

\begin{definition}[Skew convex functions]\label{df:bicon}
Let $f:M \lra \R$ be a $C^1$-function.
We say that $f$ is \emph{$K$-skew convex} for $K \in \R$ if,
for any pair of distinct points $x,y \in M$, there is a minimal geodesic
$\gamma:[0,1] \lra M$ from $x$ to $y$ such that
\begin{equation}\label{eq:bicon}
g_{\dot{\gamma}(1)}\big( \dot{\gamma}(1),\nabla(-f)(y) \big)
 -g_{\dot{\gamma}(0)}\big( \dot{\gamma}(0),\nabla(-f)(x) \big)
 \le -Kd(x,y)^2.
\end{equation}
\end{definition}

Recall that, on a Riemannian manifold $(M,g)$,
it holds $g_{\dot{\gamma}}=g$ and \eqref{eq:bicon} indeed implies
the $K$-convexity of $f$.
In the Finsler setting, however, $g_{\dot{\gamma}}$
is different from $g_{\nabla(-f)(\gamma)}$.

For a $C^1$-function $f:M \lra \R$ and any point $x \in M$,
there exists a $C^1$-curve $\xi:[0,\infty) \lra M$ satisfying
$\xi(0)=x$ and $\dot{\xi}(t)=\nabla(-f)(\xi(t))$ for all $t$.
We call such $\xi$ a \emph{gradient curve} of $f$.
For $K \in \R$, we say that
\emph{the gradient flow of $f$ is $K$-contractive} if
\[ d\big( \xi(t),\zeta(t) \big) \le e^{-Kt}d\big( \xi(0),\zeta(0) \big) \]
holds for all gradient curves $\xi,\zeta$ and $t \in [0,\infty)$.
Comparing \eqref{eq:bicon} with \eqref{eq:1vf},
we verify that the $K$-contractivity is equivalent to the $K$-skew convexity.

\begin{theorem}\label{th:equiv}
Let $(M,F)$ be a forward complete Finsler manifold, and let
$f:M \lra \R$ be a $C^1$-function.
Then the gradient flow of $f$ is $K$-contractive if and only if
$f$ is $K$-skew convex.
\end{theorem}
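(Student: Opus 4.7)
The plan is to tie both implications to a single analytic fact about the upper right Dini derivative of $D(t):=d(\xi(t),\zeta(t))$, for gradient curves $\xi,\zeta$ of $f$. Attach to every minimal geodesic $\gamma:[0,1]\to M$ from $\xi(t)$ to $\zeta(t)$ the quantity
\[
V_t(\gamma) := g_{\dot\gamma(1)}\bigl(\dot\gamma(1),\nabla(-f)(\zeta(t))\bigr) - g_{\dot\gamma(0)}\bigl(\dot\gamma(0),\nabla(-f)(\xi(t))\bigr),
\]
so that $K$-skew convexity at $(\xi(t),\zeta(t))$ asks for some such $\gamma$ with $V_t(\gamma)\le -K D(t)^2$. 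Inserting $\dot\xi(t)=\nabla(-f)(\xi(t))$ and $\dot\zeta(t)=\nabla(-f)(\zeta(t))$ into the first variation formula \eqref{eq:1vf}, I plan to extract two refinements. The \emph{upper bound} $\limsup_{s\downarrow 0}(D(t+s)-D(t))/s\le V_t(\gamma)/D(t)$ is valid for every minimal $\gamma$ between $\xi(t)$ and $\zeta(t)$; it follows by using the exp-map variation $\alpha_s(r):=\exp_{\gamma(r)}(sW(r))$ (with $W$ a smooth vector field along $\gamma$ interpolating $\dot\xi(t),\dot\zeta(t)$) as a competitor curve from $\xi(t+s)$ to $\zeta(t+s)$ and computing its length to first order in $s$. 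The \emph{attained lower bound} gives a matching inequality in the opposite direction: forward completeness with Hopf--Rinow lets us choose minimal geodesics $\gamma_s$ from $\xi(s)$ to $\zeta(s)$ with uniformly bounded $F(\dot\gamma_s)=D(s)$; a $C^1$-subsequential limit $\gamma_0$ along $s_k\downarrow 0$ is minimal between $\xi(t),\zeta(t)$ and satisfies $\liminf_k(D(s_k)-D(t))/s_k\ge V_t(\gamma_0)/D(t)$, proved via the reverse variation $\beta_{s_k}(r):=\exp_{\gamma_{s_k}(r)}(-s_k W_{s_k}(r))$ whose endpoints lie within $o(s_k)$ of $\xi(t),\zeta(t)$.

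Granted these, ``skew convex $\Rightarrow$ contractive'' is immediate: at each $t$ with $D(t)>0$, skew convexity at $(\xi(t),\zeta(t))$ produces a $\gamma$ with $V_t(\gamma)\le -K D(t)^2$, so the upper bound yields $\limsup_{s\downarrow 0}(D(t+s)-D(t))/s\le -K D(t)$. A standard Dini-type Gronwall comparison, using continuity and non-negativity of $D$, then gives $D(t)\le e^{-Kt}D(0)$.

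Conversely, let $\xi,\zeta$ be gradient curves with $\xi(0)=x\ne y=\zeta(0)$. The contractivity hypothesis gives $\limsup_{s\downarrow 0}(D(s)-D(0))/s\le -K d(x,y)$, hence the same bound for the liminf along any subsequence. Applying the attained lower bound at $t=0$ produces a minimal geodesic $\gamma_0$ from $x$ to $y$ with
\[
\frac{V_0(\gamma_0)}{d(x,y)}\le \liminf_{k\to\infty}\frac{D(s_k)-D(0)}{s_k}\le -K\,d(x,y),
\]
i.e.\ $V_0(\gamma_0)\le -K d(x,y)^2$, which is the $K$-skew convexity inequality at $(x,y)$.

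The main technical obstacle is the attained lower bound, specifically the first-order expansion $L(\beta_{s_k})=L(\gamma_{s_k})-s_k V_0(\gamma_0)/d(x,y)+o(s_k)$ needed to produce the liminf. This uniform variational calculation along the exp-map family is secured by the $C^\infty$-smoothness of $F^2$ away from the zero section (all relevant velocities satisfy $F\ge c>0$ since $D(s_k)\to d(x,y)>0$) together with the $C^1$-convergence $\gamma_{s_k}\to\gamma_0$ from Hopf--Rinow; combining with $d(x,y)\le L(\beta_{s_k})+o(s_k)$ (after adjusting endpoints by short segments of length $o(s_k)$) and $L(\gamma_{s_k})=D(s_k)$ produces the claimed inequality.
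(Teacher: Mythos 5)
Your overall strategy matches the paper's: bound the Dini derivative of $D(t)=d(\xi(t),\zeta(t))$ via the first variation of arclength and close with a Gronwall argument; and for the converse, produce a minimal geodesic at $t=0$ witnessing the skew convexity inequality from the derivative of $e^{Kt}D(t)$. The difference lies entirely in how you obtain the two one-sided Dini bounds, and here the paper is considerably leaner. For the \emph{upper} bound, the paper does not build a competitor variation: it splits the minimal geodesic at $\gamma(1/2)$, observes that $\gamma(1/2)$ is not a cut point of $\gamma(0)$ and $\gamma(1)$ is not a cut point of $\gamma(1/2)$ (both automatic along the \emph{interior} of a minimizer), and applies the cited formula \eqref{eq:1vf} to each half after using the triangle inequality through the midpoint. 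This sidesteps both the cut-locus difficulty and the endpoint-matching issue in your curve $\alpha_s$; note $\alpha_s(0)=\exp_{\xi(t)}(s\dot\xi(t))\neq\xi(t+s)$, so as stated $\alpha_s$ is not a curve from $\xi(t+s)$ to $\zeta(t+s)$, and you would need the same $o(s)$ endpoint correction you invoke later for $\beta_{s_k}$. For the \emph{converse}, your ``attained lower bound'' is where you are doing much more work than necessary: the paper simply reduces (by the same midpoint splitting, ``if necessary'') to the case where $\gamma(s)$ is not a cut point of $\gamma(0)$ for all $s\in(0,1]$, at which point \eqref{eq:1vf} is a genuine two-sided limit, giving the skew convexity inequality directly without any subsequential limit of geodesics or reverse variation; the inequalities for the two halves recombine to the one for $\gamma$ because $g_{cv}(cv,w)=cg_v(v,w)$ and $(d/2)^2\cdot 2\cdot 2=d^2$. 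Your version is not wrong -- the precompactness of $\{\gamma_s\}$ via Hopf--Rinow, the $C^1$ subsequential limit, and the first-order length estimate for $\beta_{s_k}$ (using smoothness of $F^2$ off the zero section and that all speeds stay away from $0$) can be made rigorous -- but it essentially re-derives a one-sided version of \eqref{eq:1vf} from scratch, adds uniformity-of-remainder concerns the paper never has to face, and conceals the simple observation that the cut-point obstruction can be removed once and for all by a midpoint split. If you wish to retain your structure, you should (i) fix the endpoint mismatch in the upper-bound competitor exactly as you do for $\beta_{s_k}$, and (ii) recognize that after the midpoint reduction the ``attained lower bound'' is just the equality in \eqref{eq:1vf}, so the subsequence machinery can be deleted.
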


\begin{proof}
We first assume that $f$ is $K$-skew convex.
Fix two gradient curves $\xi,\zeta:[0,\infty) \lra M$ of $f$ and set
$l(t):=d(\xi(t),\zeta(t))$.
Given $t>0$, let $\gamma:[0,1] \lra M$ be a minimal geodesic
from $\xi(t)$ to $\zeta(t)$ such that \eqref{eq:bicon} holds.
Note that $\gamma(1/2)$ ($\zeta(t)$, resp.) is not a cut point of $\xi(t)$
($\gamma(1/2)$, resp.).
Thus the first variation formula \eqref{eq:1vf} shows that,
together with the triangle inequality,
\begin{align*}
\limsup_{\ve \downarrow 0} \frac{l(t+\ve)-l(t)}{\ve}
&\le \lim_{\ve \downarrow 0}
 \frac{d(\xi(t+\ve),\gamma(1/2))-d(\xi(t),\gamma(1/2))}{\ve} \\
&\quad +\lim_{\ve \downarrow 0}
 \frac{d(\gamma(1/2),\zeta(t+\ve))-d(\gamma(1/2),\zeta(t))}{\ve} \\
&= -g_{\dot{\gamma}(0)}\big( \dot{\gamma}(0)/l(t),\dot{\xi}(t) \big)
 +g_{\dot{\gamma}(1)}\big( \dot{\gamma}(1)/l(t),\dot{\zeta}(t) \big).
\end{align*}
By hypothesis, this yields $l'(t) \le -Kl(t)$ a.e.\ $t$.
Therefore we deduce from Gronwall's theorem that
$d(\xi(t),\zeta(t)) \le e^{-Kt}d(\xi(0),\zeta(0))$ holds.

To see the converse, suppose that the gradient flow of $f$ is $K$-contractive
and take a minimal geodesic $\gamma:[0,1] \lra M$.
Dividing $\gamma$ into $\gamma|_{[0,1/2]}$ and $\gamma|_{[1/2,1]}$
if necessary, we can assume that
$\gamma(s)$ is not a cut point of $\gamma(0)$ for all $s \in (0,1]$.
Consider gradient curves $\xi,\zeta:[0,\infty) \lra M$ of $f$ with
$\xi(0)=\gamma(0)$ and $\zeta(0)=\gamma(1)$,
and put $l(t):=d(\xi(t),\zeta(t))$ again.
Then it follows from the assumption that
\[ \frac{d}{dt}\Big|_{t=0+} \big[ e^{Kt}l(t) \big] \le 0. \]
This immediately implies the $K$-skew convexity,
as the first variation formula \eqref{eq:1vf} shows
\begin{align*}
\frac{d}{dt}\Big|_{t=0+}\big[ e^{Kt}l(t) \big]
&= Kl(0) +\frac{d}{dt}\Big|_{t=0+} l(t) \\
&= Kl(0) +g_{\dot{\gamma}(1)}\big( \dot{\gamma}(1)/l(0),\dot{\zeta}(0) \big)
 -g_{\dot{\gamma}(0)}\big( \dot{\gamma}(0)/l(0),\dot{\xi}(0) \big).
\end{align*}
$\qedd$
\end{proof}

\begin{remark}\label{rm:bicon}
We can replace the $C^1$-regularity in Definition~\ref{df:bicon}
with the local semi-convexity as follows (cf., e.g., \cite{Ly}, \cite{Ogra} for details).
We say that a function $f:M \lra \R$ is \emph{locally semi-convex}
if, for any $x \in M$, there are an open set $U \ni x$ and $K \in \R$
such that $f|_{U}$ is $K$-convex along any geodesic $\gamma:[0,1] \lra U$.
Define the \emph{local slope} of $f$ at $x \in M$ as
\begin{equation}\label{eq:grad}
|\grad f|(x):=\limsup_{y \to x} \frac{\max\{ f(x)-f(y),0 \}}{d(x,y)}.
\end{equation}
For each $x \in M$ with $|\grad f|(x)>0$,
there exists a unique unit vector $v \in T_xM$ satisfying
$\lim_{t \downarrow 0}\{f(\gamma(t))-f(x)\}/t=-|\grad f|(x)$,
where $\gamma$ is the geodesic with $\dot{\gamma}(0)=v$.
We define $\grad f(x):=|\grad f|(x) \cdot v$, and $\grad f(x):=0$ if $|\grad f|(x)=0$.
Then, from any initial point, there starts a gradient curve $\xi$ solving $\dot{\xi}(t)=\grad f(\xi(t))$ a.e.\ $t$.
The $K$-skew convexity can be defined by using $\grad f$ instead of $\nabla(-f)$ in \eqref{eq:bicon},
and the analogue of Theorem~\ref{th:equiv} holds by the same argument.
\end{remark}

\section{Skew convexity of squared norms}\label{sc:dist}

We study the skew convexity of the squared norm of a Minkowski space,
and compare it with the usual convexity along straight lines.
The more general case of distance functions on Finsler manifolds will be treated
in Appendix.

Let $(\R^n,\|\cdot\|)$ be a Minkowski space and set $f(x):=\|\!-x\|^2/2$.
Observe $\nabla(-f)(x)=-x$, so that the gradient curve $\xi$ of $f$
with $\xi(0)=x$ is given by $\xi(t)=e^{-t}x$.
Thus we see that the gradient flow of $f$ is $1$-contractive,
which shows that $f$ is $1$-skew convex.
This can be proved also by a direct calculation as, for any $v \in \R^n \setminus \{0\}$,
\[ g_v\big( v,\nabla(-f)(x+v) \big) -g_v\big( v,\nabla(-f)(x) \big)
 =-g_v(v,v)=-\|v\|^2. \]

\begin{remark}\label{rm:linf}
The above example obviously requires the strict convexity of the norm.
In fact, even the uniqueness of gradient flows fails in non-strictly convex normed spaces.
As pointed out by the referee, a typical example is the same function $f(x)=\|x\|_{\infty}^2/2$
in the $2$-dimensional $\ell_{\infty}$-space $(\R^2,\|\cdot\|_{\infty})$.
Any curve $\xi(t) =(e^{-t},h(t))$ satisfying $|h(t)| \le e^{-t}$ and $|h'(t)| \le e^{-t}$
is a gradient curve of $f$ in the metric sense of \cite{AGS}.
\end{remark}

In contrast to the $1$-skew convexity of $f$ above,
we can find a norm $\|\cdot\|$ of $\R^n$ such that
the function $\tilde{f}(x):=\langle x,x \rangle/2$ associated with
the Euclidean inner product $\langle \cdot,\cdot \rangle$ is not even $0$-skew convex,
although $\tilde{f}$ is convex along straight lines.
To see this, we observe $D\tilde{f}(x)=x$ and
$\nabla(-\tilde{f})(x)=\cL^*(-x)$ by identifying both $T_x\R^n$ and $T_x^*\R^n$
with $\R^n$.
Choosing $v=-x \neq 0$, we have
\[ g_v\big( v,\nabla(-\tilde{f})(0) \big) -g_v\big( v,\nabla(-\tilde{f})(-v) \big)
 =-g_v\big( v,\cL^*(v) \big) =-\langle \cL(v),\cL^*(v) \rangle, \]
where we used \eqref{eq:Leg} in the second equality.
However, $-\langle \cL(v),\cL^*(v) \rangle/\|v\|^2$ can be positive.
An example is illustrated in Figure~2, where we set $v=(0,1) \in \R^2$.
The parallelogram rounded to be strictly convex is the unit sphere of the norm $\|\cdot\|$.
Note that $\langle \cL(v),v \rangle =\|v\|^2=1$,
$\langle v,\cL^*(v) \rangle =\|\cL^*(v)\|^2$
and that $\langle \cL(v),\cL^*(v) \rangle <0$.

\begin{center}
\begin{picture}(400,210)
\put(180,10){Figure~2}

\put(200,30){\vector(0,1){170}}
\put(80,110){\vector(1,0){240}}

\put(130,165){\line(1,0){70}}
\put(200,170){\line(1,0){100}}
\qbezier(160,135)(200,165)(208,171)

\thicklines
\qbezier(200,165)(203,170)(290,170)
\qbezier(210,65)(270,125)(290,158)
\qbezier(290,170)(298,170)(290,158)
\qbezier(200,165)(199,164)(190,155)
\qbezier(200,55)(197,50)(110,50)
\qbezier(190,155)(130,95)(110,62)
\qbezier(110,50)(102,50)(110,62)
\qbezier(200,55)(201,56)(210,65)

\put(200,110){\vector(0,1){55}}
\put(200,110){\vector(3,2){110}}
\put(200,110){\vector(-3,4){41}}

\put(200,174){\line(1,0){4}}
\put(204,170){\line(0,1){4}}
\put(177.5,148.5){\line(-3,4){3}}
\put(174.5,152.5){\line(-4,-3){4}}

\put(220,60){$\|\cdot\|=1$}
\put(205,155){$v$}
\put(143,172){$\cL(v)$}
\put(270,182){$\cL^*(v)$}

\end{picture}
\end{center}

Given any $K<0$, by scaling $\tilde{f}$ (or the inner product) with sufficiently large $C>0$,
the convex function $C\tilde{f}$ is not $K$-skew convex.
This observation reveals that the skew convexity has no (obvious) relation with the usual convexity.
In addition, via Theorem~\ref{th:equiv}, we have seen that the usual convexity
does not imply the contractivity in non-Euclidean normed spaces.
This answers the question in \cite{AGS} quoted in the introduction.

\section{Heat flow on Minkowski spaces}\label{sc:heat}

In order to apply our technique to the heat flow on Minkowski spaces,
we regard it as the gradient flow of the relative entropy
with respect to the \emph{reverse} Wasserstein distance.
We refer to \cite{AGS} and \cite{Vi} for Wasserstein geometry
as well as the gradient flow theory.
Throughout the section, let $(\R^n,\|\cdot\|)$ be a Minkowski space
in the sense of Subsection~\ref{ssc:Mpre}.

\subsection{Wasserstein geometry over Minkowski spaces}

Let $\cP(\R^n)$ be the set of Borel probability measures on $\R^n$.
Define $\cP_2(\R^n) \subset \cP(\R^n)$ as the set of measures $\mu$
satisfying $\int_{\R^n} \|x\|^2 \,d\mu<\infty$
(note that then $\int_{\R^n} \|\!-x\|^2 \,d\mu<\infty$ holds as well).
The subset of absolutely continuous measures with respect to the Lebesgue measure $dx$
will be denoted by $\cP_2^{\ac}(\R^n) \subset \cP_2(\R^n)$.

Given $\mu, \nu \in \cP_2(\R^n)$, a probability measure
$\pi \in \cP(\R^n \times \R^n)$ is called a \emph{coupling} of $\mu$ and $\nu$
if $\pi(A \times \R^n)=\mu(A)$ and $\pi(\R^n \times A)=\nu(A)$
hold for all Borel sets $A \subset \R^n$.
We define the \emph{$L^2$-Wasserstein distance} from $\mu$ to $\nu$ by
\[ W_2(\mu,\nu):=\inf_{\pi} \bigg( \int_{\R^n \times \R^n}
 \|y-x\|^2 \,d\pi(x,y) \bigg)^{1/2}, \]
where $\pi$ runs over all couplings of $\mu$ and $\nu$.
A coupling $\pi$ attaining the infimum above is said to be \emph{optimal}.
We call $(\cP_2(\R^n),W_2)$ the \emph{$L^2$-Wasserstein space} over
$(\R^n,\|\cdot\|)$.

\begin{remark}\label{rm:W2}
(a) Thanks to \eqref{eq:ell}, our norm is comparable to an inner product.
In fact, \eqref{eq:CS} yields $\cC^{-1}\|y\| \le \sqrt{g_x(y,y)} \le \cS\|y\|$.
Then, if we denote by $W_2^{g_x}$ the Wasserstein distance
with respect to $g_x$, we have
$\cC^{-1}W_2(\mu,\nu) \le W_2^{g_x}(\mu,\nu) \le \cS W_2(\mu,\nu)$.
This relation is sometimes useful to apply known results in the Euclidean case.

(b) The least constant $c \ge 1$ satisfying $\|y\|^2 \le \langle y,y \rangle \le c\|y\|^2$
for some inner product $\langle \cdot,\cdot \rangle$ and all $y \in \R^n$
can not be bounded only by the dimension $n$,
unlike John's theorem for symmetric norms ($c \le n$).
For instance, consider the norm whose unit sphere is the standard unit sphere,
but with the center $(1-\ve,0,\ldots,0)$.
Letting $\ve \downarrow 0$, we have $c \to \infty$ (and $\cC, \cS \to \infty$).
\end{remark}

For $\mu \in \cP^{\ac}_2(\R^n)$ and $\nu \in \cP_2(\R^n)$,
there exists a semi-convex function $\varphi$
on an open set $\Omega \subset \R^n$ with $\mu(\Omega)=1$
such that $\pi:=(\id_{\R^n} \times T_1)_{\sharp}\mu$
provides the unique optimal coupling of $\mu$ and $\nu$,
where we set $T_t(x):=x+t\nabla\varphi(x)$ for $t \in [0,1]$
(by, e.g., \cite[Theorem~10.26]{Vi} under the conditions {\bf (locLip)}, {\bf (SC)}, {\bf (H$\infty$)}).
Moreover, $\mu_t:=(T_t)_{\sharp}\mu$ is the unique minimal geodesic
from $\mu_0=\mu$ to $\mu_1=\nu$.
Note that $\varphi$ is twice differentiable a.e.\ on $\Omega$
in the sense of Alexandrov, thus $T_t$ is well-defined and differentiable
a.e.\ on $\Omega$.

We introduce a Finsler structure of the Wasserstein space along the line of
\cite{Ot}, see \cite{OS} for more details in the case of compact Finsler manifolds.
We set
\[ \hat{T}\cP :=\{ \Phi=\nabla\varphi \,|\, \varphi \in C_c^{\infty}(\R^n) \} \]
and define the tangent space $(T_{\mu}\cP,F_{\mu})$ at $\mu \in \cP_2(\R^n)$
as the completion of $\hat{T}\cP$ with respect to the Minkowski norm
\[ F_{\mu}(\Phi):=\bigg( \int_{\R^n} \|\Phi\|^2 \,d\mu \bigg)^{1/2} \]
(of the space of measurable vector fields $\Phi$ with $F_{\mu}(\Phi)<\infty$).
Similarly, the cotangent space $(T^*_{\mu}\cP,F^*_{\mu})$
is defined as the completion of
$\hat{T}^*\cP :=\{ \alpha=D\varphi \,|\, \varphi \in C_c^{\infty}(\R^n) \}$
with respect to
$F^*_{\mu}(\alpha):=(\int_{\R^n} \|\alpha\|_*^2 \,d\mu)^{1/2}$.
We define the Legendre transform $\cL^*_{\mu}:T_{\mu}^*\cP \lra T_{\mu}\cP$
in the pointwise way that $\cL^*_{\mu}(D\varphi)=\nabla\varphi$.

We say that a curve $(\mu_t)_{t \in I} \subset \cP_2(\R^n)$
on an open interval $I \subset \R$ is ($2$-)\emph{absolutely continuous}
if there is some function $h \in L^2(I)$ such that
\[ W_2(\mu_t,\mu_{\tau}) \le \int_t^{\tau} h(r) \,dr \]
holds for all $t,\tau \in I$ with $t<\tau$.
Note that an absolutely continuous curve is continuous.
Given an absolutely continuous curve $(\mu_t)_{t \in I}$,
the \emph{forward absolute gradient} (or the \emph{metric speed})
\[ |\dot{\mu}_t|:=\lim_{\tau \to t}
 \frac{W_2(\mu_{\min\{\tau,t\}},\mu_{\max\{\tau,t\}})}{|\tau-t|} \]
is well-defined for a.e.\ $t \in I$
(\cite[Theorem~1.1.2]{AGS}, \cite[Lemma~7.1]{OS}).
We can associate $(\mu_t)_{t \in I}$ with a Borel vector field
$\Phi$ on $I \times \R^n$ (with $\Phi_t(x):=\Phi(t,x) \in T_x\R^n$) satisfying
\begin{itemize}
\item $\Phi_t \in T_{\mu_t}\cP$ at a.e.\ $t \in I$,
\item the \emph{continuity equation} $\del_t \mu_t+\div(\Phi_t \mu_t)=0$
in the weak sense that
\begin{equation}\label{eq:conti}
\int_I \int_{\R^n} \{ \del_t \psi+D\psi(\Phi) \} \,d\mu_t dt=0
\end{equation}
for all $\psi \in C_c^{\infty}(I \times \R^n)$
(\cite[Theorem~8.3.1]{AGS}, \cite[Theorem~7.3]{OS}).
\end{itemize}
Such a vector field $\Phi$ is unique up to a difference on a null measure set with respect to $d\mu_t dt$,
and we have $F_{\mu_t}(\Phi_t)=|\dot{\mu}_t|$ a.e.\ $t \in I$.
We will call $\Phi$ the \emph{tangent vector field} of  the curve
$(\mu_t)_{t \in I}$ and write $\dot{\mu}_t=\Phi_t$.

Now, consider a function $Q:\cP_2(\R^n) \lra [-\infty,\infty]$.
We say that $Q$ is \emph{differentiable} at $\mu \in \cP_2(\R^n)$
if $-\infty<Q(\mu)<\infty$ and if there is some $\alpha \in T_{\mu}^*\cP$ such that
\begin{equation}\label{eq:DQ}
\int_{\R^n} \alpha(\Phi) \,d\mu \ge \limsup_{t \to 0}
 \frac{Q(\mu_t)-Q(\mu)}{t}
\end{equation}
holds for every minimal geodesic $(\mu_t)_{t \in [0,1]}$ with
$\mu_t=(T_t)_{\sharp}\mu$ and $T_t(x)=x+t\Phi(x)$,
and that equality holds in \eqref{eq:DQ} if $\Phi \in \hat{T}\cP$
(with $\lim_{t \to 0}$ in place of $\limsup_{t \to 0}$).
Such a one-form $\alpha$ is unique in $T^*_{\mu}\cP$ up to a difference on a
$\mu$-null measure set.
Thus we write $DQ(\mu)=\alpha$ and define the \emph{gradient vector}
of $Q$ at $\mu$ by $\nabla_W Q(\mu):=\cL_{\mu}^*(DQ(\mu))$.

\begin{definition}[Gradient curves in $(\cP_2(\R^n),W_2)$]\label{df:Wgf}
We say that an absolutely continuous curve
$(\mu_t)_{t \in [0,T)} \subset \cP_2(\R^n)$
with $T \in (0,\infty]$ is a \emph{gradient curve} of $Q$
if $\dot{\mu}_t=\nabla_W (-Q)(\mu_t)$ holds for a.e.\ $t \in (0,T)$.
\end{definition}

We remark that the differentiability of $-Q$ at a.e.\ $t \in (0,T)$ is included in the above definition.

\subsection{Nonlinear heat equation and global solutions}

We define the (distributional) \emph{Finsler Laplacian} $\Delta$
acting on $u \in H^1_{\loc}(\R^n)$ by
\[ \int_{\R^n} \psi \Delta u \,dx =-\int_{\R^n} D\psi(\nabla u) \,dx \]
for all $\psi \in C^{\infty}_c(\R^n)$.
Note that $\Delta$ is a nonlinear operator since the Legendre transform is nonlinear
(unless the norm $\|\cdot\|$ comes from an inner product).
We consider the associated \emph{heat equation} $\del_t u=\Delta u$
also in the weak form.
We have seen in \cite[Theorem~3.4]{OS} that, given $u_0 \in H^1_0(\R^n)$
and $T>0$, there exists a unique \emph{global solution}
$u \in L^2([0,T],H^1_0(\R^n)) \cap H^1([0,T],L^2(M))$
to $\del_t u=\Delta u$ in the weak sense that
\begin{equation}\label{eq:heat}
\int_{\R^n} \psi \del_t u \,dx=-\int_{\R^n} D\psi(\nabla u) \,dx
\end{equation}
holds for all $t \in [0,T]$ and $\psi \in H^1_0(\R^n)$.

We can also regard $(u_t)_{t \in [0,T]}$ as a weak solution to the heat equation
$\del_t v=\Delta\!^{\nabla u}v$ associated with the linear, second order,
time-dependent differential operator
\begin{equation}\label{eq:Lapv}
\Delta\!^{\nabla u}v:=\div\bigg( \sum_{i,j=1}^n
 g^{ij}(\nabla u) \frac{\del v}{\del x^i} \frac{\del}{\del x^j} \bigg),
\end{equation}
where $(g^{ij})$ stands for the inverse matrix of $(g_{ij})$ and
$\nabla u(x)$ is replaced with some nonzero vector if $\nabla u(x)=0$
(in a measurable way).
By virtue of \eqref{eq:ell}, $(g^{ij}(\nabla u))$ is globally uniformly elliptic
with respect to the Euclidean inner product.
Therefore the classical theory due to Nash~\cite{Na}, Moser~\cite{Mo},
Aronson~\cite{Ar} and others yields the parabolic Harnack inequality
as well as the Gaussian estimates from both sides for fundamental solutions
(see also \cite{Sal} for the Riemannian case).
Moreover, the continuous version of $u$ is $H^2_{\loc}$ in $x$ and
$C^{1,\alpha}$ on $(0,\infty) \times \R^n$ (\cite[Theorems~4.6, 4.9]{OS}).

The following lemma allows us to consider $(u_t\,dx)_{t \ge 0}$ as a curve in
$\cP_2(\R^n)$.

\begin{lemma}\label{lm:heat}
Let $(u_t)_{t \ge 0} \subset H^1_0(\R^n)$ be a global solution to the heat equation.
Then we have the following.
\begin{enumerate}[{\rm (i)}]
\item {\rm (Mass preserving)}
If $u_0 \,dx \in \cP(\R^n)$, then $u_t \,dx \in \cP(\R^n)$ for all $t>0$.

\item
If $u_0 \,dx \in \cP_2(\R^n)$, then $u_t \,dx \in \cP_2(\R^n)$
for all $t >0$.

\item {\rm (Continuity in $W_2$)}
If $u_0 \,dx \in \cP_2(\R^n)$, then $\lim_{t \to 0}W_2(u_0 \,dx,u_t \,dx)=0$.
\end{enumerate}
\end{lemma}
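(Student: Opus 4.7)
The plan is to exploit the observation (made right after \eqref{eq:Lapv}) that, once the solution $u$ is fixed, it is also a weak solution of the \emph{linear} divergence form parabolic equation $\del_t v = \Delta^{\nabla u} v$ with time-dependent coefficients $a^{ij}(t,x) := g^{ij}(\nabla u(t,x))$, extended measurably on $\{\nabla u = 0\}$. By \eqref{eq:ell}, these coefficients are globally bounded, measurable, and uniformly elliptic on $[0,T] \times \R^n$, so the classical Nash--Moser--Aronson theory produces a fundamental solution $p(t,x,y)$ with the two-sided Gaussian bound, mass conservation $\int_{\R^n} p(t,x,y)\,dx = 1$, and (by the uniqueness of weak solutions in $L^2([0,T],H^1_0(\R^n))$ already proved in the paper) the integral representation
\[
u_t(x) = \int_{\R^n} p(t,x,y)\,u_0(y)\,dy.
\]
With this package in place, all three claims become short Gaussian computations.

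For (i), non-negativity of $u_t$ follows from $u_0 \ge 0$ and $p \ge 0$, while mass preservation is immediate from Fubini:
\[
\int_{\R^n} u_t(x)\,dx = \int_{\R^n} u_0(y) \int_{\R^n} p(t,x,y)\,dx\,dy = \int_{\R^n} u_0(y)\,dy = 1.
\]
For (ii), I would combine the Aronson upper bound $p(t,x,y) \le C t^{-n/2} e^{-c|x-y|^2/t}$ with the triangle estimate $\|x\|^2 \le 2\|x-y\|^2 + 2\|y\|^2$ (valid for any Minkowski norm) to get, via the change of variables $z=x-y$,
\[
\int_{\R^n} \|x\|^2\,p(t,x,y)\,dx \le C_1 t + C_2 \|y\|^2;
\]
integrating against $u_0\,dy$ and using $u_0\,dx \in \cP_2(\R^n)$ then yields finiteness. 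For (iii), I would introduce the coupling $d\pi_t(x,y) := u_0(y)\,p(t,x,y)\,dx\,dy$, whose marginals are $u_0\,dx$ (in $y$) and $u_t\,dx$ (in $x$) by mass conservation, and apply the same Gaussian estimate to obtain
\[
W_2(u_0\,dx, u_t\,dx)^2 \le \int \|x-y\|^2\,d\pi_t(x,y) \le C\,t \longrightarrow 0
\]
as $t \to 0$.

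The main obstacle is the underlying linear-theory input itself: namely, verifying that mass conservation $\int p(t,x,y)\,dx = 1$ and the representation formula hold for the fundamental solution of a divergence form equation with merely bounded measurable uniformly elliptic coefficients. Both are classical (they follow from a cutoff test function argument combined with the Aronson upper bound), but they should be quoted or briefly justified before the three Gaussian estimates above can be assembled. A small secondary point is that $\nabla u(t,x)$ vanishes on a set of positive measure in general, so one must check that the measurable extension of $g^{ij}(\nabla u)$ on $\{\nabla u = 0\}$ fixed in \eqref{eq:Lapv} preserves the uniform ellipticity inherited from \eqref{eq:ell}; this is automatic since the bounds in \eqref{eq:ell} hold at every $x \neq 0$.
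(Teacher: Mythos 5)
Your parts (i) and (ii) coincide with the paper's argument: both rest on the representation $u_t(x)=\int q^u(t,x;0,y)u_0(y)\,dy$ via the fundamental solution of the linearized equation $\del_t v=\Delta^{\nabla u}v$, mass conservation $\int q^u(t,x;0,y)\,dx=1$, and Aronson's upper Gaussian bound; the second-moment estimate in (ii) is the same splitting $\|x\|^2\le 2\|x-y\|^2+2\|y\|^2$ followed by a Gaussian integral. Your part (iii), however, takes a genuinely different and in fact shorter route. The paper proves $W_2$-continuity indirectly: it establishes weak convergence $u_t\,dx\to u_0\,dx$ by testing against $C_c^\infty$ functions and using Cauchy--Schwarz on $\int_0^\tau\int f\,\del_t u$, and separately proves uniform decay of the second-moment tails $\int_{\{|x|\ge R\}}|x|^2 u_t\,dx$, then invokes the standard characterization of $W_2$-convergence (weak convergence plus convergence of second moments). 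You instead build the explicit coupling $d\pi_t(x,y)=u_0(y)\,q^u(t,x;0,y)\,dy\,dx$ of $u_0\,dx$ and $u_t\,dx$ and bound the transport cost directly: by the upper Gaussian estimate and the change of variables $z=x-y$, $\int\|x-y\|^2\,d\pi_t\le C\,t\to 0$. This is correct (the marginals are exactly right by mass conservation) and avoids the weak-convergence detour altogether; it also gives the quantitative rate $W_2(u_0\,dx,u_t\,dx)=O(\sqrt{t})$, which the paper's argument does not state explicitly. The paper's route has the virtue of being adaptable to settings where no two-sided heat-kernel control is available, but in this situation, where the Gaussian bound is used anyway in (ii), your coupling argument is the more economical one.
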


\begin{proof}
(i) This easily follows from the existence of the fundamental solution
$q^u$ to the equation $\del_t v=\Delta\!^{\nabla u}v$.
Precisely, $u_t(x)=\int_{\R^n} q^u(t,x;0,y) u_0(y) \,dy$ and
$\int_{\R^n} q^u(t,x;0,y) \,dx=1$ imply $\int_{\R^n} u_t \,dx=1$.

(ii) By virtue of the upper Gaussian bound (cf.\ \cite[Corollary~6.2]{Sal}),
we have
\[ q^u(t,x;0,y) \le C_1t^{-n/2} \exp\bigg(\! -\frac{|x-y|^2}{C_2t} \bigg), \]
where $|\cdot|$ stands for the Euclidean norm and $C_1$, $C_2$ depend only on $\|\cdot\|$.
Thus we obtain
\begin{align*}
&\int_{\R^n} \|x\|^2 u_t(x) \,dx
 \le \int_{\R^n} \int_{\R^n} 2(\|x-y\|^2 +\|y\|^2) q^u(t,x;0,y) u_0(y) \,dydx \\
&\le 2C_1t^{-n/2} \int_{\R^n} \int_{\R^n}
 \|x-y\|^2 \exp\bigg(\! -\frac{|x-y|^2}{C_2t} \bigg) u_0(y) \,dxdy
 +2\int_{\R^n} \|y\|^2 u_0(y) \,dy \\
&\le C_3 t +2\int_{\R^n} \|y\|^2 u_0(y) \,dy <\infty,
\end{align*}
where we used the fact $se^{-s} \le e^{-s/2}$ in the last inequality
and $C_3$ depends only on $C_1,C_2$, the norm $\|\cdot\|$ and $n$.

(iii)
It suffices to show that $u_t \,dx$ weakly converges to $u_0 \,dx$ and
\[ \lim_{R \to \infty} \limsup_{t \to 0} \int_{\{|x| \ge R\}} |x|^2 u_t(x) \,dx=0 \]
by Remark~\ref{rm:W2}(a) (cf.\ \cite[Theorem~6.9]{Vi}).
For the weak convergence, thanks to (i) and \cite[Remark~5.1.6]{AGS},
it is sufficient to show the convergence for test functions $f \in C_c^{\infty}(\R^n)$.
This immediately follows from \eqref{eq:heat}, indeed,
the Cauchy-Schwarz inequality yields
\[ \bigg| \int_0^{\tau} \int_{\R^n} f \del_t u \,dxdt \bigg|
 \le \bigg( \tau \int_{\R^n} \| Df \|_*^2 \,dx \bigg)^{1/2}
 \bigg( \int_0^{\tau} \int_{\R^n} \| \nabla u_t \|^2 \,dxdt \bigg)^{1/2}
 \to 0 \]
as $\tau$ tends to zero.
The latter condition can be seen similarly to (ii).
As $|x|<2|x-y|$ if $|x| \ge R$ and $|y|<R/2$, we have
\begin{align*}
\int_{\{|x| \ge R\}} |x|^2 u_t(x) \,dx
&\le \int_{\{|x| \ge R\}} \int_{\{|y| \ge R/2\}} 2(|x-y|^2 +|y|^2) q^u(t,x;0,y) u_0(y) \,dydx \\
&\quad +\int_{\{|x| \ge R\}} \int_{\{|y|<R/2\}} 4|x-y|^2 q^u(t,x;0,y) u_0(y) \,dydx \\
&\le Ct +2\int_{\{|y| \ge R/2\}} |y|^2 u_0(y) \,dy \to 0
\end{align*}
as $t \to 0$ and then $R \to \infty$.
$\qedd$
\end{proof}

\subsection{Relative entropy and heat flow as its gradient flow}

We define the \emph{relative entropy} (with respect to the Lebesgue measure) by
\[ \Ent(\mu):=\int_{\R^n} \rho \log\rho \,dx \in (-\infty,\infty] \]
for $\mu=\rho\, dx \in \cP_2^{\ac}(\R^n)$, and $\Ent(\mu):=\infty$
for $\mu \in \cP_2(\R^n) \setminus \cP_2^{\ac}(\R^n)$.
See \cite[Section~4]{JKO} (and Remark~\ref{rm:W2}(a))
for the fact $\Ent >-\infty$ on $\cP_2^{\ac}(\R^n)$.
We know that $\Ent$ is convex along geodesics in $(\cP_2(\R^n),W_2)$
(\cite[page~908]{Vi}).
There is a well established theory on gradient flows of such convex functionals,
for which we refer to \cite{AGS}.
Here we explain that a global solution to the heat equation gives
the gradient flow of the relative entropy along the lines of \cite{OS}, \cite{AGS} and \cite{Er}.
The next lemma corresponds to \cite[Proposition~7.7]{OS},
see also \cite[Theorem~10.4.17]{AGS} and \cite[Proposition~4.3]{Er}.

\begin{lemma}\label{lm:DEnt}
For $\mu=\rho \,dx \in \cP_2^{\ac}(\R^n)$ with $\rho \in H^1_0(\R^n)$ and $\Ent(\mu)<\infty$,
the following are equivalent.
\begin{enumerate}[{\rm (I)}]
\item $-\Ent$ is differentiable at $\mu$.
\item $\|\nabla(-\rho)\| /\rho \in L^2(\R^n,\mu)$.
\end{enumerate}
Moreover, then $\nabla(-\rho)/\rho \in T_{\mu}\cP$ and we have
\[ \nabla_W(-\Ent)(\mu) =\frac{\nabla(-\rho)}{\rho}. \]
\end{lemma}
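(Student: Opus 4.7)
The plan is to identify the one-form $\beta:=D(-\rho)/\rho$ with the Wasserstein gradient of $-\Ent$ at $\mu$. Since $\|\nabla u\|=\|Du\|_*$, one has $F^*_\mu(\beta)^2=\int_{\R^n}\|\nabla(-\rho)\|^2/\rho\,dx$, so condition (II) is precisely the statement $\beta\in T^*_\mu\cP$, in which case $\cL^*_\mu(\beta)=\nabla(-\rho)/\rho\in T_\mu\cP$ by positive homogeneity of the pointwise Legendre transform. The equivalence with (I) will then deliver $\nabla_W(-\Ent)(\mu)=\nabla(-\rho)/\rho$.

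First I would carry out the derivative computation along a Wasserstein geodesic $\mu_t=(T_t)_\sharp\mu$ with $T_t(x)=x+t\nabla\varphi(x)$ and $\varphi$ semi-convex. The Monge--Amp\`ere change of variables yields
\[ \Ent(\mu_t)-\Ent(\mu)=-\int_{\R^n}\rho(x)\log\det\!\big(I+t\Hess\varphi(x)\big)\,dx \]
for $t$ small (with $\Hess$ in the Alexandrov sense). When $\varphi\in C_c^\infty(\R^n)$, Taylor expansion together with dominated convergence gives the limit $-\int\rho\,\Delta\varphi\,dx$, which by Euclidean integration by parts (valid since $\rho\in H^1_0$) equals $\int D\rho(\nabla\varphi)\,dx=-\int\beta(\nabla\varphi)\,d\mu$; equivalently,
\[ \lim_{t\to 0}\frac{(-\Ent)(\mu_t)-(-\Ent)(\mu)}{t}=\int_{\R^n}\beta(\nabla\varphi)\,d\mu. \]

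For the direction (II)$\Rightarrow$(I), assuming $\beta\in T^*_\mu\cP$, the identity above is the equality case of \eqref{eq:DQ} with $\alpha=\beta$ on $\hat T\cP$. For a general semi-convex $\varphi$ the pointwise bound $\log\det(I+tA)\le t\,\mathrm{tr}(A)$ applied to the Alexandrov Hessian gives
\[ \Ent(\mu_t)-\Ent(\mu)\ge -t\int_{\R^n}\rho\,\mathrm{tr}(\Hess\varphi)^{\mathrm{ac}}\,dx, \]
and since the singular part of $\Hess\varphi$ is nonnegative (by semi-convexity) integration by parts pushes the bound down to $t\int D\rho(\nabla\varphi)\,dx+o(t)$. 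Displacement convexity of $\Ent$ makes $t^{-1}(\Ent(\mu_t)-\Ent(\mu))$ monotone in $t$, so the right-hand derivative exists and we obtain $\int\beta(\Phi)\,d\mu\ge\limsup_{t\to 0}t^{-1}((-\Ent)(\mu_t)-(-\Ent)(\mu))$. This verifies \eqref{eq:DQ}, so $-\Ent$ is differentiable at $\mu$ with $\nabla_W(-\Ent)(\mu)=\cL^*_\mu(\beta)=\nabla(-\rho)/\rho$.

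For the converse (I)$\Rightarrow$(II), given the gradient form $\alpha\in T^*_\mu\cP$ of $-\Ent$ at $\mu$, the equality case of \eqref{eq:DQ} combined with the first step forces $\int\alpha(\nabla\varphi)\,d\mu=-\int D\rho(\nabla\varphi)\,dx$ for every $\varphi\in C_c^\infty$, whence Cauchy--Schwarz in $T^*_\mu\cP$ yields the uniform bound $|\int D(-\rho)(\nabla\varphi)\,dx|\le F^*_\mu(\alpha)\,F_\mu(\nabla\varphi)$. Testing with $C_c^\infty$ truncations/mollifications $\varphi_n$ of $-\log\rho$---so that $\nabla\varphi_n$ approaches $\nabla(-\log\rho)=\nabla(-\rho)/\rho$ and $\int D(-\rho)(\nabla\varphi_n)\,dx$ tends monotonically to $\int\|\nabla(-\rho)\|^2/\rho\,dx$---and passing to the limit gives $\int\|\nabla(-\rho)\|^2/\rho\,dx\le F^*_\mu(\alpha)\cdot(\int\|\nabla(-\rho)\|^2/\rho\,dx)^{1/2}$, hence $F^*_\mu(\beta)\le F^*_\mu(\alpha)<\infty$, which is (II). The step I anticipate being most delicate is the semi-convex case of (II)$\Rightarrow$(I): simultaneously handling the Alexandrov Hessian, its nonnegative singular part, and the integration by parts against a mere $H^1_0$-density requires exactly the kind of care spelled out in \cite[Proposition~7.7]{OS}, \cite[Theorem~10.4.17]{AGS} and \cite[Proposition~4.3]{Er}; in the Minkowski setting only the pointwise Legendre transform $\cL^*$ (replacing Riemannian index-raising) enters as a new feature.
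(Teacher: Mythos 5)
Your proposal is correct in spirit and arrives at the same conclusions, but it differs from the paper's proof in both directions and contains a notational slip worth flagging. For (II)$\Rightarrow$(I), you redo by hand the displacement change-of-variables computation that the paper outsources to \cite[Theorem~23.14]{Vi} (see also \cite[Proposition~7.7]{OS}), so the broad strategy matches. However, on a Minkowski space the transport map is $T_t(x)=x+t\nabla\varphi(x)$ with $\nabla\varphi=\cL^*(D\varphi)$, and the Jacobian entering the Monge--Amp\`ere identity is $I+t\,D(\nabla\varphi)$, \emph{not} $I+t\Hess\varphi$: these differ because the Legendre transform is nonlinear. Your displayed formula $\Ent(\mu_t)-\Ent(\mu)=-\int\rho\log\det(I+t\Hess\varphi)\,dx$ is thus incorrect as written, though your final answer survives the slip because $\mathrm{tr}\,D(\nabla\varphi)=\div(\nabla\varphi)$ and integration by parts lands on $\int D(-\rho)(\nabla\varphi)\,dx$ either way; the matrix $D(\nabla\varphi)$ is also not symmetric in general, so the bound $\log\det(I+tA)\le t\,\mathrm{tr}(A)$ needs a little more care than for the symmetric Alexandrov Hessian. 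For (I)$\Rightarrow$(II) your approach is genuinely different: you test the identity $\int\alpha(\nabla\varphi)\,d\mu=\int\beta(\nabla\varphi)\,d\mu$ on $C_c^\infty$ against truncations/mollifications of $-\log\rho$ and pass to the limit, whereas the paper first shows $|\grad\Ent|(\mu)\le F_\mu(\nabla_W(-\Ent)(\mu))<\infty$ via displacement convexity of $\Ent$ (comparing the difference quotient along a geodesic with the global slope) and then invokes \cite[Proposition~4.3]{Er} to pass from finite slope to finite Fisher information. The paper's route is shorter and avoids justifying the monotone approximation of $-\log\rho$ (which is delicate for a merely $H^1_0$ density that may vanish and is not controlled at infinity); your route is more self-contained but this limiting step would need substantial elaboration to be rigorous.
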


\begin{proof}
(I) $\Rightarrow$ (II):
It suffices to see that $|\grad \Ent|(\mu)$ defined as in \eqref{eq:grad} is finite,
then \cite[Proposition~4.3]{Er} yields (II)
because our norm is comparable to a Euclidean norm.
Take $\nu \in \cP_2^{\ac}(\R^n)$ with $\Ent(\nu)<\infty$ and let
$\mu_t=(T_t)_{\sharp}\mu$ with $T_t(x)=x+t\Phi(x)$ be the minimal geodesic
from $\mu_0=\mu$ to $\mu_1=\nu$.
We deduce from the convexity of $\Ent(\mu_t)$ that
\[ \limsup_{t \to 0}\frac{\Ent(\mu)-\Ent(\mu_t)}{W_2(\mu,\mu_t)}
 \ge \frac{\Ent(\mu)-\Ent(\nu)}{W_2(\mu,\nu)}. \]
Thanks to the differentiability of $-\Ent$, we observe
\[ \limsup_{t \to 0}\frac{\Ent(\mu)-\Ent(\mu_t)}{t}
 \le \int_{\R^n} [D(-\Ent)(\mu)](\Phi) \,d\mu
 \le F_{\mu}\big( \nabla_W(-\Ent)(\mu) \big) \cdot W_2(\mu,\nu). \]
Therefore we have $|\grad \Ent|(\mu) \le F_{\mu}(\nabla_W(-\Ent)(\mu))<\infty$.

(II) $\Rightarrow$ (I):
Again due to \cite[Proposition~4.3]{Er}, we obtain $|\grad \Ent|(\mu)<\infty$
as well as $D(-\rho)/\rho \in T^*_{\mu}\cP$.
For any minimal geodesic $(\mu_t)_{t \in [0,1]}$ as in (I) $\Rightarrow$ (II),
\cite[Theorem~23.14]{Vi} (see also \cite[Proposition~7.7]{OS} for the compact case) shows that
\[ \lim_{t \to 0}\frac{\Ent(\mu)-\Ent(\mu_t)}{t}
 =\int_{\R^n} \div(\Phi) \,d\mu
 \le \int_{\R^n} \bigg[ \frac{D(-\rho)}{\rho} \bigg](\Phi) \,d\mu, \]
and equality holds if $\Phi \in \hat{T}\cP$.
Thus $-\Ent$ is differentiable and it holds $\nabla_W(-\Ent)(\mu)=\nabla(-\rho)/\rho$.
$\qedd$
\end{proof}

The following theorem is a slight modification of
\cite[Theorem~7.8]{OS} adapted to noncompact spaces.
For the sake of simplicity, we are concerned with the reverse heat equation,
that is the heat equation with respect to the reverse norm
$\|x\|_{\leftarrow} :=\|\!-x\|$.
Since $\overleftarrow{\nabla}u=-\nabla(-u)$, we can write it as
\begin{equation}\label{eq:rheat}
\int_{\R^n} \psi \del_t u \,dx
 =-\int_{\R^n} D\psi(\overleftarrow{\nabla}u) \,dx
 =\int_{\R^n} D\psi\big( \nabla(-u) \big) \,dx.
\end{equation}

\begin{theorem}[Heat flow as gradient flow]\label{th:gf=hf}
\begin{enumerate}[{\rm (i)}]
\item
Let $(\rho_t)_{t \ge 0} \subset H^1_0(\R^n)$ be a global solution to the reverse
heat equation with $\rho_0 \,dx \in \cP^{\ac}_2(\R^n)$.
Then $\mu_t:=\rho_t \,dx$ is a gradient curve of the relative entropy
$($in the sense of Definition~$\ref{df:Wgf})$.

\item
Conversely, let $(\mu_t)_{t \ge 0} \subset \cP_2^{\ac}(\R^n)$
be a gradient curve of the relative entropy, put $\mu_t=\rho_t \,dx$
and assume that $\rho_t \in H^1_0(\R^n)$ for a.e.\ $t$.
Then $\rho_t$ is a global solution to the reverse heat equation.
\end{enumerate}
\end{theorem}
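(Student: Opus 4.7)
The plan is to use the continuity equation \eqref{eq:conti} as the common bridge between the PDE formulation and the gradient-flow formulation, with the identification of the Wasserstein gradient of $-\Ent$ supplied by Lemma~\ref{lm:DEnt}. Concretely, I will show that the velocity field $\Phi_t := \nabla(-\rho_t)/\rho_t$ (defined on $\{\rho_t > 0\}$, set to $0$ elsewhere) is simultaneously the driving vector field of \eqref{eq:rheat} and the Wasserstein gradient $\nabla_W(-\Ent)(\mu_t)$.

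For (i), I first test \eqref{eq:rheat} against product functions $\psi(t,x) = \eta(t)\varphi(x)$ with $\eta \in C_c^\infty((0,T))$ and $\varphi \in C_c^\infty(\R^n)$ and integrate in $t$; the result rearranges to
\[ \int_0^T \int_{\R^n} \big\{ \eta'(t)\varphi(x) + \eta(t) D\varphi\big(\Phi_t(x)\big) \big\} \,d\mu_t dt = 0, \]
which is precisely the weak continuity equation \eqref{eq:conti} for the pair $(\mu_t, \Phi_t)$. Next, I verify that $\Phi_t \in T_{\mu_t}\cP$ for a.e.\ $t$ by bounding the Fisher information $\int_{\R^n} \|\nabla(-\rho_t)\|^2/\rho_t\,dx$; thanks to \eqref{eq:ell} this is comparable to its Euclidean counterpart, and local integrability in $t$ follows from the standard energy estimate for the linearized uniformly parabolic operator \eqref{eq:Lapv} combined with the Aronson-type Gaussian lower bound (giving positivity of $\rho_t$ on bounded sets for $t>0$). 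By \cite[Theorem~8.3.1]{AGS} (transplanted via Remark~\ref{rm:W2}(a)), $(\mu_t)$ is then absolutely continuous in $W_2$ with tangent vector field $\dot\mu_t = \Phi_t$. Finally, implication (II)$\Rightarrow$(I) of Lemma~\ref{lm:DEnt} identifies $\Phi_t$ with $\nabla_W(-\Ent)(\mu_t)$ at a.e.\ $t$, establishing (i).

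For (ii), the argument runs in reverse. A gradient curve is by definition absolutely continuous, so it carries a unique tangent vector field $\Phi_t \in T_{\mu_t}\cP$ obeying \eqref{eq:conti}, and differentiability of $-\Ent$ at $\mu_t$ for a.e.\ $t$ is built into Definition~\ref{df:Wgf}. Under the hypothesis $\rho_t \in H^1_0(\R^n)$, implication (I)$\Rightarrow$(II) of Lemma~\ref{lm:DEnt} then applies and yields $\Phi_t = \nabla(-\rho_t)/\rho_t$. Substituting this into \eqref{eq:conti} and undoing the integration by parts in $x$ recovers \eqref{eq:rheat}.

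The principal obstacle is entirely technical: justifying the Fisher information bound and the tangent-vector-field identification in the noncompact setting, where \cite[Theorem~7.8]{OS} does not directly apply. The essential tools will be the two-sided Gaussian estimates recalled in Subsection~5.2, the second-moment control from Lemma~\ref{lm:heat}, and careful bookkeeping on the $\mu_t$-null set $\{\rho_t = 0\}$ where $\Phi_t$ is only defined by convention (so that the continuity equation is insensitive to the choice). Once these points are handled, the identification in both directions is forced by the uniqueness of the tangent vector field together with Lemma~\ref{lm:DEnt}.
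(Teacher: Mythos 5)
Your overall architecture is the same as the paper's: pass through the continuity equation, show $\Phi_t=\nabla(-\rho_t)/\rho_t$ has finite $L^2(\mu_t)$-norm at a.e.\ $t$, invoke Lemma~\ref{lm:DEnt} to identify $\Phi_t$ with $\nabla_W(-\Ent)(\mu_t)$, and for (ii) reverse these steps. Part (ii) of your argument is essentially the paper's and is fine.

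The gap is in how you propose to bound the Fisher information $\int_{\R^n}\|\nabla(-\rho_t)\|^2/\rho_t\,dx$. You claim this follows from ``the standard energy estimate for the linearized uniformly parabolic operator combined with the Aronson-type Gaussian lower bound (giving positivity of $\rho_t$ on bounded sets).'' This does not work as stated. The $L^2$-energy estimate for \eqref{eq:Lapv} controls $\int_0^T\int\|\nabla\rho_t\|^2\,dx\,dt$, but the Fisher information divides by $\rho_t$, which tends to zero at spatial infinity; positivity of $\rho_t$ on \emph{bounded} sets gives no handle on the tail of the integral, and for $t\ge\tau>0$ the Gaussian upper bound even makes $\rho_t$ uniformly \emph{small}, so dividing the energy estimate by $\sup\rho_t$ goes the wrong way. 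The paper avoids all of this with the entropy dissipation identity: plug a test function approximating $\log(\max\{\rho_t,\ve\})-\log\ve$ into the time-integrated continuity equation \eqref{eq:heat-c} and let $\ve\downarrow0$, obtaining
\[
\int_{\tau}^{T}\int_{\R^n}\frac{\|\nabla(-\rho_t)\|^2}{\rho_t^2}\,d\mu_t\,dt
=\Ent(\mu_{\tau})-\Ent(\mu_T)<\infty,
\]
where finiteness of $\Ent(\mu_t)$ for $t>0$ is first secured from the upper Gaussian bound. That identity is exactly what supplies absolute continuity of $(\mu_t)_{t>0}$, finiteness of the Fisher information at a.e.\ $t$, and hence the applicability of Lemma~\ref{lm:DEnt}. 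You should replace the heuristic energy/Gaussian argument with this test-function computation; otherwise the step ``$\Phi_t\in T_{\mu_t}\cP$ a.e.\ $t$'' is unjustified and part (i) does not close.

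A smaller point: you invoke \cite[Theorem~8.3.1]{AGS} to produce the tangent vector field before appealing to Lemma~\ref{lm:DEnt} for membership in $T_{\mu_t}\cP$. It is cleaner (and is what the paper does) to first conclude $\nabla(-\rho_t)/\rho_t\in T_{\mu_t}\cP$ from Lemma~\ref{lm:DEnt} and then use uniqueness of the tangent field solving \eqref{eq:conti} to identify $\dot\mu_t$.
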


\begin{proof}
(i) We first of all remark that $\Ent(\mu_t)<\infty$ for all $t>0$ by the upper Gaussian estimate
for fundamental solutions (see the proof of Lemma~\ref{lm:heat}(ii)).
It follows from the reverse heat equation \eqref{eq:rheat} that
$\nabla(-\rho)/\rho$ satisfies the continuity equation \eqref{eq:conti}
along the curve $(\mu_t)_{t \ge 0}$.
More generally, we have
\begin{equation}\label{eq:heat-c}
\int_{\R^n} \psi_T \,d\mu_T -\int_{\R^n} \psi_{\tau} \,d\mu_{\tau}
= \int_{\tau}^T \int_{\R^n}
 \bigg\{ \del_t\psi +D\psi\bigg( \frac{\nabla(-\rho_t)}{\rho_t} \bigg) \bigg\} \,d\mu_t dt
\end{equation}
for all $\psi \in C_c^{\infty}([0,\infty) \times \R^n)$ and $0<\tau<T<\infty$.
Then we obtain, by choosing a test function $\psi$
approximating $\log(\max\{\rho_t,\ve\})-\log\ve$ on $[\tau,T] \times \R^n$
and then letting $\ve \downarrow 0$,
\[ \int_{\tau}^T \int_{\R^n} \frac{\|\nabla(-\rho_t)\|^2}{\rho_t^2} \,d\mu_t dt
 =\Ent(\mu_{\tau})-\Ent(\mu_T) <\infty. \]
Hence $(\mu_t)_{t>0}$ is absolutely continuous (see \cite[Theorem~8.3.1]{AGS}) and,
as $T>0$ was arbitrary, we find $\|\nabla(-\rho_t)\|/\rho_t \in L^2(\R^n,\mu_t)$ at a.e.\ $t$.
This implies that $-\Ent$ is differentiable at $\mu_t$ and
$\nabla_W(-\Ent)(\mu_t)=\nabla(-\rho_t)/\rho_t \in T_{\mu_t}\cP$
a.e.\ $t$ (Lemma~\ref{lm:DEnt}).
Combining $\nabla(-\rho_t)/\rho_t \in T_{\mu_t}\cP$ with \eqref{eq:heat-c}, we conclude
$\dot{\mu_t}=\nabla(-\rho_t)/\rho_t$ and thus $(\mu_t)_{t \ge 0}$
is a gradient curve of $\Ent$ in the sense of Definition~\ref{df:Wgf}.

(ii) Note that Lemma~\ref{lm:DEnt} ensures
$\dot{\mu}_t=\nabla_W(-\Ent)(\mu_t) =\nabla(-\rho_t)/\rho_t$ for a.e.\ $t$.
Then the continuity equation with $\Phi=\nabla(-\rho)/\rho$
immediately implies the reverse heat equation.
$\qedd$
\end{proof}

\begin{remark}\label{rm:DEnt}
The formula $\nabla_W(-\Ent)(\mu)=\nabla(-\rho)/\rho$
in Lemma~\ref{lm:DEnt} has an extra importance in the Finsler/Minkowski setting.
The reverse heat equation \eqref{eq:rheat} is rewritten via the integration by parts
as $\int_{\R^n} \psi \del_t \rho \,dx
 =\int_{\R^n} \Delta\!^{\nabla(-\rho)}\psi \,\rho \,dx$
(in other words, $\overleftarrow{\Delta}\rho=\Delta^{\nabla(-\rho)}\rho$, see \eqref{eq:Lapv}).
Then the homogeneity $g_{\nabla(-\rho)}=g_{\nabla(-\rho)/\rho}$ guarantees
that a \emph{formal} calculation with respect to the time-dependent Riemannian
structure $g_{\nabla(-\rho)}$ verifies Theorem~\ref{th:gf=hf}.
\end{remark}

\subsection{Skew convexity and Wasserstein contraction}

To show an analogous result to Theorem~\ref{th:equiv} for the relative entropy,
we prove the first variation formula for the Wasserstein distance along heat flow
(along the line of \cite[Section~10.2]{AGS}).

\begin{proposition}[First variation formula for $W_2$ along heat flow]\label{pr:W1vf}
For any global solutions
$(\rho_t)_{t \ge 0}, (\sigma_t)_{t \ge 0} \subset H^1_0(\R^n)$
to the reverse heat equation \eqref{eq:rheat} such that
$\mu_t=\rho_t \,dx \in \cP_2^{\ac}(\R^n)$ and
$\nu_t=\sigma_t \,dx \in \cP_2^{\ac}(\R^n)$, we have
\begin{equation}\label{eq:W1vf}
\lim_{\tau \downarrow t}
 \frac{W_2(\mu_{\tau},\nu_{\tau})^2-W_2(\mu_t,\nu_t)^2}{2(\tau-t)}
 =\int_{\R^n} g_{\dot{\omega}_1}(\dot{\omega}_1,\dot{\nu}_t) \,d\nu_t
 -\int_{\R^n} g_{\dot{\omega}_0}(\dot{\omega}_0,\dot{\mu}_t) \,d\mu_t
\end{equation}
for all $t>0$, where $\omega:[0,1] \lra \cP_2(\R^n)$ is the minimal
geodesic from $\mu_t$ to $\nu_t$.
\end{proposition}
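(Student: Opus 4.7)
The strategy, following \cite[Section~10.2]{AGS}, is to bracket $W_2(\mu_\tau,\nu_\tau)^2$ between two linear functionals built from Kantorovich potentials, differentiate them via the continuity equation satisfied by $(\mu_t)$ and $(\nu_t)$, and identify the Minkowski derivatives via the Legendre formula \eqref{eq:Leg}. Theorem~\ref{th:gf=hf}(i) provides the tangent vector fields $\dot{\mu}_t=\nabla(-\rho_t)/\rho_t$ and $\dot{\nu}_t=\nabla(-\sigma_t)/\sigma_t$, satisfying \eqref{eq:conti}. Fix $t>0$ and let $\varphi$ be the semi-convex potential such that $T(x):=x+\nabla\varphi(x)$ realises the optimal coupling $\gamma=(\id\times T)_\sharp\mu_t$ and the Wasserstein geodesic $\omega_s=((1-s)\id+sT)_\sharp\mu_t$; then $\dot{\omega}_0(x)=\dot{\omega}_1(T(x))=T(x)-x$ for $\mu_t$-a.e.\ $x$.

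For the \emph{lower bound}, choose $c$-conjugate Kantorovich potentials $(\phi,\psi)$ for the cost $c(x,y):=\|y-x\|^2/2$, so that $\phi(x)+\psi(y)\le c(x,y)$ pointwise with equality $\gamma$-a.e. The suboptimality inequality
\[
\frac{1}{2}W_2(\mu_\tau,\nu_\tau)^2\ge\int_{\R^n}\phi\,d\mu_\tau+\int_{\R^n}\psi\,d\nu_\tau,
\]
with equality at $\tau=t$, combined with the continuity equation applied to $\phi$ and $\psi$ (justified after truncation using the quadratic growth of the potentials and the Gaussian tail bounds from Lemma~\ref{lm:heat}), yields
\[
\liminf_{\tau\downarrow t}\frac{W_2(\mu_\tau,\nu_\tau)^2-W_2(\mu_t,\nu_t)^2}{2(\tau-t)}\ge \int_{\R^n} D\phi(\dot{\mu}_t)\,d\mu_t+\int_{\R^n} D\psi(\dot{\nu}_t)\,d\nu_t.
\]
The envelope identity applied to $\phi(x)=\min_y[c(x,y)-\psi(y)]$, attained at $y=T(x)$, gives $D\phi(x)=-\cL(T(x)-x)$, and symmetrically $D\psi(y)=\cL(y-T^{-1}(y))$. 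Then \eqref{eq:Leg} turns the right-hand side above into precisely $\int g_{\dot{\omega}_1}(\dot{\omega}_1,\dot{\nu}_t)\,d\nu_t-\int g_{\dot{\omega}_0}(\dot{\omega}_0,\dot{\mu}_t)\,d\mu_t$, matching \eqref{eq:W1vf}.

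For the matching \emph{upper bound}, apply the same suboptimality inequality with Kantorovich potentials $(\phi_\tau,\psi_\tau)$ for $(\mu_\tau,\nu_\tau)$ evaluated against $(\mu_t,\nu_t)$ to obtain
\[
\frac{W_2(\mu_\tau,\nu_\tau)^2-W_2(\mu_t,\nu_t)^2}{2(\tau-t)}\le\frac{1}{\tau-t}\bigg(\int_{\R^n}\phi_\tau\,d(\mu_\tau-\mu_t)+\int_{\R^n}\psi_\tau\,d(\nu_\tau-\nu_t)\bigg),
\]
and let $\tau\downarrow t$ using stability of Kantorovich potentials under $W_2$-convergence $(\mu_\tau,\nu_\tau)\to(\mu_t,\nu_t)$ together with the Minkowski Taylor expansion $\frac{1}{2}\|v+\eta w\|^2=\frac{1}{2}\|v\|^2+\eta\,g_v(v,w)+o(\eta)$ (valid for $v\ne 0$ by strong convexity~\eqref{eq:ell}). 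The main obstacle is precisely this limit: the tangent fields $\dot{\mu}_t,\dot{\nu}_t$ are only $L^2$, and the potentials $\phi_\tau$ are at best locally Lipschitz with uniform quadratic control. Both difficulties are handled within the AGS framework by means of the comparability $\cC^{-1}\|\cdot\|\le\sqrt{g_x(\cdot,\cdot)}\le\cS\|\cdot\|$ from Remark~\ref{rm:W2}(a), which imports the needed Euclidean regularity and stability results, so that the proof of \cite[Section~10.2]{AGS} carries over verbatim once the Hilbertian Taylor expansion is replaced by its Minkowski counterpart above.
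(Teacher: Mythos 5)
Your route is genuinely different from the paper's: you argue through Kantorovich duality (differentiating the pair of potentials against the continuity equation), whereas the paper argues on the primal side, representing $(\mu_\tau)$ and $(\nu_\tau)$ by superposition measures $\Pi,\Xi$ on curve space (\cite[Theorem~8.2.1]{AGS}), bracketing $W_2(\mu_\tau,\nu_\tau)^2$ between the suboptimal plan $(e_\tau\times e_\tau)_\sharp[\Pi^t_x\otimes\Xi^t_y\,d\pi_t]$ and the optimal $\pi_\tau$, and invoking narrow stability of optimal plans (\cite[Lemma~10.2.8]{AGS}). Your lower bound is sound and in fact cleaner than the paper's corresponding (``$\ge$'') direction: the envelope identities $D\phi(x)=-\cL(T(x)-x)$, $D\psi(y)=\cL(y-T^{-1}(y))$, combined with $\cL(v)(w)=g_v(v,w)$ from \eqref{eq:Leg} and $\dot\omega_0(x)=T(x)-x=\dot\omega_1(T(x))$, do give exactly the stated right-hand side, and the truncation remark is the right way to justify using the quadratically-growing $\phi,\psi$ in \eqref{eq:conti}.

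The upper bound, however, has a real gap. You reduce it to passing to the limit in
\[
\frac{1}{\tau-t}\int_{\R^n}\phi_\tau\,d(\mu_\tau-\mu_t)=\frac{1}{\tau-t}\int_t^\tau\!\!\int_{\R^n}D\phi_\tau(\dot\mu_s)\,d\mu_s\,ds
\]
as $\tau\downarrow t$, which requires a quantitative, $\mu_t$-a.e.\ convergence of $D\phi_\tau$ to $D\phi_t$ \emph{jointly} with $s\to t$, together with a uniform domination. You then assert that both difficulties are resolved because ``the proof of \cite[Section~10.2]{AGS} carries over verbatim once the Hilbertian Taylor expansion is replaced by its Minkowski counterpart.'' This is not accurate: the AGS argument in \S 10.2 is precisely the primal/superposition argument (the one the paper actually runs), not a dual potential-stability argument, so there is nothing there to ``carry over.'' Moreover the norm-comparability $\cC^{-1}\|\cdot\|\le\sqrt{g_x(\cdot,\cdot)}\le\cS\|\cdot\|$ controls only metric quantities; the quantity you need to stabilize is $D\phi_\tau=-\cL(T_\tau-\id)$, which involves the full anisotropic structure of $\cL$, not just the metric. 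A dual proof of this proposition is certainly possible (one can get local uniform convergence of $D\phi_\tau$ from semi-concavity of $c$-concave potentials plus stability of optimal plans, and then use the $C^{1,\alpha}$ parabolic regularity of $\rho,\sigma$ to control $\int D\phi_\tau(\dot\mu_s)\,d\mu_s$ uniformly near $t$), but that argument would have to be made; citing AGS \S 10.2 does not supply it. As written, the ``$\le$'' half of \eqref{eq:W1vf} is not established.
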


\begin{proof}
Set $l(t):=W_2(\mu_t,\nu_t)$,
fix $\delta>0$ and define $\widetilde{\Gamma}$ as the set of
continuous curves $\xi:[t,t+\delta] \lra \R^n$ endowed with the uniform topology.
For $\tau \in [t,t+\delta]$, we define the evaluation map
$e_{\tau}:\widetilde{\Gamma} \lra \R^n$ by $e_{\tau}(\xi):=\xi(\tau)$.
Then there exist probability measures $\Pi, \Xi \in \cP(\widetilde{\Gamma})$ such that
$(e_{\tau})_{\sharp}\Pi=\mu_{\tau}$, $(e_{\tau})_{\sharp}\Xi=\nu_{\tau}$
for all $\tau \in [t,t+\delta]$ and that $\Pi$, $\Xi$ are concentrated
on the set of $C^1$-curves $\xi$, $\zeta$ solving
\[ \dot{\xi}(\tau)=\frac{\nabla(-\rho_{\tau})}{\rho_{\tau}} \big( \xi(\tau) \big), \qquad
 \dot{\zeta}(\tau)=\frac{\nabla(-\sigma_{\tau})}{\sigma_{\tau}}\big( \zeta(\tau) \big) \]
for all $\tau \in [t,t+\delta]$, respectively (cf.\ \cite[Theorem~8.2.1]{AGS}).
We remark that $\rho_{\tau},\sigma_{\tau}>0$ for $\tau>0$ by the lower Gaussian estimate.

To see `$\le$' of \eqref{eq:W1vf},
we disintegrate $\Pi$ and $\Xi$ by using $\mu_t$ and $\nu_t$ as
$d\Pi=d\Pi_x^t d\mu_t(x)$ and $d\Xi=d\Xi_y^t d\nu_t(y)$,
where $\Pi_x^t, \Xi_y^t \in \cP(\widetilde{\Gamma})$ concentrate on the sets
$e_t^{-1}(x)$ and $e_t^{-1}(y)$, respectively.
Take the unique minimal geodesic $\omega:[0,1] \lra \cP_2(\R^n)$ from
$\mu_t$ to $\nu_t$, and let $\pi_t$ be the unique optimal coupling
of $\mu_t$ and $\nu_t$.
Then we find, for each $\tau \in [t,t+\delta]$,
\[ l(\tau)^2 \le
 \int_{\R^n \times \R^n} \int_{\widetilde{\Gamma} \times \widetilde{\Gamma}}
 \|\zeta(\tau)-\xi(\tau)\|^2 \,d\Pi_x^t(\xi) d\Xi_y^t(\zeta) d\pi_t(x,y). \]
We deduce from the first variation formula \eqref{eq:1vf} on the underlying space
$(\R^n,\|\cdot\|)$ that
\begin{align*}
&\lim_{\tau \downarrow t}
 \frac{\|\zeta(\tau)-\xi(\tau)\|^2-\|\zeta(t)-\xi(t)\|^2}{2(\tau -t)} \\
&= g_{\zeta(t)-\xi(t)}\big( \zeta(t)-\xi(t),\dot{\zeta}(t) \big)
 -g_{\zeta(t)-\xi(t)}\big( \zeta(t)-\xi(t),\dot{\xi}(t) \big) \\
&= g_{\zeta(t)-\xi(t)}\bigg( \zeta(t)-\xi(t),
 \frac{\nabla(-\sigma_t)}{\sigma_t}\big( \zeta(t) \big)
 -\frac{\nabla(-\rho_t)}{\rho_t}\big( \xi(t) \big) \bigg)
\end{align*}
for $\Pi$-a.e.\ $\xi$ and $\Xi$-a.e.\ $\zeta$.
Since $\rho$ and $\sigma$ are $C^1$ on $(0,\infty) \times \R^n$,
this convergence is uniform on
\[ \Omega_{\ve}:=\{ x \in \R^n \,|\,
 \|x\|<\ve^{-1},\ \rho_t(x)>\ve,\ \sigma_t(x)>\ve \} \]
for each $\ve>0$.
In order to see that the effect of $\R^n \setminus \Omega_{\ve}$
is negligible as $\ve$ tends to zero, we observe from
\begin{align*}
&\|\zeta(\tau)-\xi(\tau)\|^2-\|\zeta(t)-\xi(t)\|^2 \\
&\le (\|\zeta(\tau)-\xi(\tau)\|+\|\zeta(t)-\xi(t)\|)
 (\|\zeta(\tau)-\zeta(t)\|+\|\xi(t)-\xi(\tau)\|)
\end{align*}
that
\begin{align*}
&\int_{\R^n \times \R^n} \int_{\widetilde{\Gamma} \times \widetilde{\Gamma}}
 \frac{\|\zeta(\tau)-\xi(\tau)\|^2-\|\zeta(t)-\xi(t)\|^2}{2(\tau -t)}
 \,d\Pi_x^t(\xi) d\Xi_y^t(\zeta) d\pi_t(x,y) \\
&\le \bigg( \int_{\R^n \times \R^n}
 \int_{\widetilde{\Gamma} \times \widetilde{\Gamma}}
 \{ \|\zeta(\tau)-\xi(\tau)\|^2+\|\zeta(t)-\xi(t)\|^2 \}
 \,d\Pi_x^t(\xi) d\Xi_y^t(\zeta) d\pi_t(x,y) \bigg)^{1/2} \\
&\qquad \times \bigg(
 \int_{\widetilde{\Gamma}} \frac{\|\zeta(\tau)-\zeta(t)\|^2}{(\tau -t)^2} \,d\Xi(\zeta)
 +\int_{\widetilde{\Gamma}} \frac{\|\xi(t)-\xi(\tau)\|^2}{(\tau -t)^2} \,d\Pi(\xi) \bigg)^{1/2}.
\end{align*}
This is finite uniformly in $\tau \in (t,t+\delta]$, because
\begin{align*}
&\frac{1}{(\tau -t)^2} \int_{\widetilde{\Gamma}} \|\zeta(\tau)-\zeta(t)\|^2 \,d\Xi(\zeta)
 \le \frac{1}{(\tau -t)^2} \int_{\widetilde{\Gamma}}
 \bigg( \int_t^{\tau} \|\dot{\zeta}(s)\| \,ds \bigg)^2 d\Xi(\zeta) \\
&\le \frac{1}{\tau -t} \int_{\widetilde{\Gamma}}
 \int_t^{\tau} \|\dot{\zeta}(s)\|^2 \,dsd\Xi(\zeta)
 =\frac{1}{\tau -t} \int_t^{\tau} |\dot{\nu}_s|^2 \,ds.
\end{align*}
Therefore we obtain
\[ \limsup_{\tau \downarrow t} \frac{l(\tau)^2-l(t)^2}{2(\tau-t)}
 \le \int_{\R^n \times \R^n} \int_{\widetilde{\Gamma} \times \widetilde{\Gamma}}
 g_{y-x}\big( y-x,\dot{\zeta}(t)-\dot{\xi}(t) \big)
 \,d\Pi_x^t(\xi) d\Xi_y^t(\zeta) d\pi_t(x,y). \]
Note that
\begin{align*}
\int_{\R^n \times \R^n} \int_{\widetilde{\Gamma}}
 g_{y-x}\big( y-x,\dot{\xi}(t) \big) \,d\Pi_x^t(\xi)d\pi_t(x,y)
&=\int_{\R^n} \int_{\widetilde{\Gamma}}
 g_{\dot{\omega}_0(x)}\big( \dot{\omega}_0(x),\dot{\xi}(t) \big)
 \,d\Pi_x^t(\xi)d\mu_t(x) \\
&= \int_{\R^n} g_{\dot{\omega}_0}(\dot{\omega}_0,\dot{\mu}_t) \,d\mu_t.
\end{align*}
Hence we have
\[ \limsup_{\tau \downarrow t} \frac{l(\tau)^2-l(t)^2}{2(\tau-t)}
 \le \int_{\R^n} g_{\dot{\omega}_1}(\dot{\omega}_1,\dot{\nu}_t) \,d\nu_t
 -\int_{\R^n} g_{\dot{\omega}_0}(\dot{\omega}_0,\dot{\mu}_t) \,d\mu_t. \]

To see the reverse inequality, we fix $\tau \in (t,t+\delta)$,
take the optimal coupling $\pi_{\tau}$ of $\mu_{\tau}$ and $\nu_{\tau}$,
and disintegrate $\Pi$ and $\Xi$ as $d\Pi=d\Pi_x^{\tau} d\mu_{\tau}(x)$
and $d\Xi=d\Xi_y^{\tau} d\nu_{\tau}(y)$.
Observe that
\[ l(\tau)^2-l(t)^2 \ge
 \int_{\R^n \times \R^n} \int_{\widetilde{\Gamma} \times \widetilde{\Gamma}}
 \{ \|\zeta(\tau)-\xi(\tau)\|^2-\|\zeta(t)-\xi(t)\|^2 \}
 \,d\Pi_x^{\tau}(\xi) d\Xi_y^{\tau}(\zeta) d\pi_{\tau}(x,y). \]
Since the function
\[ [0,1] \ni s \longmapsto
 \| \{(1-s)\zeta(t)+s\zeta(\tau)\}-\{(1-s)\xi(t)+s\xi(\tau)\} \|^2 \]
is convex, the first variation formula \eqref{eq:1vf} (at $s=0$) yields that
\[  \|\zeta(\tau)-\xi(\tau)\|^2-\|\zeta(t)-\xi(t)\|^2
 \ge 2g_{\zeta(t)-\xi(t)}\big( \zeta(t)-\xi(t),
 \{ \zeta(\tau)-\zeta(t) \}-\{ \xi(\tau)-\xi(t) \} \big). \]
Thus we find
\begin{align*}
&\frac{l(\tau)^2-l(t)^2}{2(\tau-t)} \\
&\ge \int_{\R^n \times \R^n} \int_{\widetilde{\Gamma} \times \widetilde{\Gamma}}
 g_{\zeta(t)-\xi(t)}\bigg( \zeta(t)-\xi(t),
 \frac{\zeta(\tau)-\zeta(t)}{\tau -t}-\frac{\xi(\tau)-\xi(t)}{\tau -t} \bigg)
 \,d\Pi_x^{\tau}(\xi) d\Xi_y^{\tau}(\zeta) d\pi_{\tau}(x,y).
\end{align*}
Recall that $(\xi(\tau)-\xi(t))/(\tau -t)$ converges to
$\dot{\xi}(t)=[\nabla(-\rho_t)/\rho_t](\xi(t))$ uniformly on $\Omega_{\ve}$.
Moreover,
\[ d\tilde{\pi}_t^{\tau}:=(e_t \times e_t)_{\sharp} \bigg[
 \int_{\R^n \times \R^n} d\Pi_x^{\tau} d\Xi_y^{\tau} d\pi_{\tau}(x,y)
 \bigg] \in \cP_2(\R^n \times \R^n) \]
weakly converges to $\pi_t$ as $\tau \downarrow t$ due to \cite[Lemma~10.2.8]{AGS}.
Precisely, as $\tilde{\pi}_t^{\tau}$ is a coupling of $\mu_t$ and $\nu_t$,
the family $\{ \tilde{\pi}_t^{\tau} \}_{\tau \in (t,t+\delta)}$
is relatively compact (cf.\ \cite[Remark~5.2.3]{AGS}).
Combining this with the simple estimate
\begin{align*}
&\bigg( \int_{\R^n \times \R^n} \|y-x\|^2 \,d\tilde{\pi}_t^{\tau}(x,y) \bigg)^{1/2} \\
&\le \bigg( \int_{\widetilde{\Gamma}} \|\zeta(t)-\zeta(\tau)\|^2 \,d\Xi(\zeta) \bigg)^{1/2}
 +W_2(\mu_{\tau},\nu_{\tau})
 +\bigg( \int_{\widetilde{\Gamma}} \|\xi(\tau)-\xi(t)\|^2 \,d\Pi(\xi) \bigg)^{1/2} \\
&\to W_2(\mu_t,\nu_t) \qquad (\tau \downarrow t)
\end{align*}
and the uniqueness of the optimal coupling $\pi_t$,
we see that the limit of $\tilde{\pi}_t^{\tau}$ must be $\pi_t$.
Therefore we obtain
\[ \liminf_{\tau \downarrow t} \frac{l(\tau)^2-l(t)^2}{2(\tau-t)}
 \ge \int_{\R^n \times \R^n} g_{y-x}\bigg( y-x,
 \frac{\nabla(-\sigma_t)}{\sigma_t}(y)-\frac{\nabla(-\rho_t)}{\rho_t}(x) \bigg)
 \,d\pi_t(x,y) \]
and complete the proof.
$\qedd$
\end{proof}

Now, the following is shown in a similar way to Theorem~\ref{th:equiv}.

\begin{proposition}[Skew convexity versus Wasserstein contraction]\label{pr:Wcont}
For $K \in \R$, the following are equivalent.

\begin{enumerate}[{\rm (I)}]
\item The relative entropy is $K$-skew convex in the sense that,
for any $\mu=\rho \,dx \in \cP_2^{\ac}(\R^n)$ such that
$\rho \in H^1_0(\R^n) \cap C^1(\R^n)$, $\Ent(\mu)<\infty$,
$\|\nabla(-\rho)/\rho\| \in L^2(\R^n,\mu)$ and for any $\nu=\sigma \,dx \in \cP_2^{\ac}(\R^n)$
satisfying the same conditions, it holds
\begin{equation}\label{eq:omega}
\int_{\R^n} g_{\dot{\omega}_1}
 \big( \dot{\omega}_1,\nabla_W (-\Ent)(\nu) \big) \,d\nu
 -\int_{\R^n} g_{\dot{\omega}_0}
 \big( \dot{\omega}_0,\nabla_W (-\Ent)(\mu) \big) \,d\mu
 \le -KW_2(\mu,\nu)^2,
\end{equation}
where $\omega:[0,1] \lra \cP_2(\R^n)$ is the minimal geodesic from $\mu$ to $\nu$.

\item The reverse heat flow is $K$-contractive in the sense that, for any global solutions
$(\rho_t)_{t \ge 0}, (\sigma_t)_{t \ge 0} \subset H^1_0(\R^n)$
to the reverse heat equation such that
$\mu_t:=\rho_t \,dx \in \cP_2^{\ac}(\R^n)$ and
$\nu_t:=\sigma_t \,dx \in \cP_2^{\ac}(\R^n)$, we have
\begin{equation}\label{eq:Wcont}
W_2(\mu_t,\nu_t) \le e^{-Kt}W_2(\mu_0,\nu_0) \qquad
{\it for\ all}\ t \in [0,\infty).
\end{equation}
\end{enumerate}
\end{proposition}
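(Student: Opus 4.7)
The plan is to mirror the proof of Theorem~\ref{th:equiv} at the Wasserstein level, with Proposition~\ref{pr:W1vf} replacing the pointwise first variation formula \eqref{eq:1vf}, and the identification $\dot\mu_t=\nabla_W(-\Ent)(\mu_t)=\nabla(-\rho_t)/\rho_t$ along a reverse heat flow, supplied by Theorem~\ref{th:gf=hf}(i) together with Lemma~\ref{lm:DEnt}. The regularity statements collected in Section~\ref{sc:heat}---positivity and $C^{1,\alpha}$-smoothness of $\rho_t,\sigma_t$ for $t>0$, and the integrability $\|\nabla(-\rho_t)/\rho_t\|\in L^2(\R^n,\mu_t)$ at a.e.\ $t>0$---ensure that the hypotheses imposed in (I) are met by $\mu_t,\nu_t$ for a.e.\ $t>0$ along the flow.

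For (I) $\Rightarrow$ (II), I would fix two global solutions $(\rho_t)_{t\ge 0},(\sigma_t)_{t\ge 0}$ of \eqref{eq:rheat} with $\mu_t,\nu_t\in\cP_2^{\ac}(\R^n)$ and set $l(t):=W_2(\mu_t,\nu_t)$. At every $t>0$ for which the regularity of (I) holds, apply \eqref{eq:omega} to the unique minimal Wasserstein geodesic $\omega$ from $\mu_t$ to $\nu_t$; inserting the identities $\dot\mu_t=\nabla_W(-\Ent)(\mu_t)$ and $\dot\nu_t=\nabla_W(-\Ent)(\nu_t)$ into Proposition~\ref{pr:W1vf} yields
\[
\limsup_{\tau\downarrow t}\frac{l(\tau)^2-l(t)^2}{2(\tau-t)}\le -K\,l(t)^2
\qquad\text{for a.e.\ }t>0.
\]
Since $t\mapsto l(t)^2$ is absolutely continuous on $(0,\infty)$, Gronwall's inequality gives $l(t)^2\le e^{-2K(t-s)}l(s)^2$ for $0<s\le t$, and letting $s\downarrow 0$ via Lemma~\ref{lm:heat}(iii) produces the contraction \eqref{eq:Wcont}.

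For (II) $\Rightarrow$ (I), I would fix $\mu=\rho\,dx$ and $\nu=\sigma\,dx$ satisfying the regularity of (I) and run the global reverse heat flows from these initial data. From $e^{Kt}l(t)\le l(0)$, squaring and rearranging,
\[
\limsup_{t\downarrow 0}\frac{l(t)^2-l(0)^2}{2t}\le -K\,l(0)^2.
\]
Applying Proposition~\ref{pr:W1vf} at $t=0$ and substituting $\dot\mu_0=\nabla_W(-\Ent)(\mu)$, $\dot\nu_0=\nabla_W(-\Ent)(\nu)$ from Theorem~\ref{th:gf=hf}(i), the left-hand side rewrites as the left-hand side of \eqref{eq:omega} along the unique geodesic $\omega$ from $\mu$ to $\nu$, which is precisely the $K$-skew convexity inequality.

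The main obstacle is that Proposition~\ref{pr:W1vf} is formulated only for $t>0$, because its proof exploits pointwise positivity of $\rho_t,\sigma_t$ and uniform control of the flow maps on the sets $\Omega_\ve$, both granted by parabolic regularity at positive times; in the direction (II) $\Rightarrow$ (I) the formula is required at $t=0$ under the stronger hypotheses of (I). The cleanest remedy is a limiting argument: apply \eqref{eq:W1vf} at times $s>0$ and let $s\downarrow 0$, using the assumed $C^1$-regularity of $\rho,\sigma$, the $L^2(\mu)$-bound on $\nabla(-\rho)/\rho$, and Lemma~\ref{lm:heat}(iii) to pass both the vector fields $\nabla(-\rho_s)/\rho_s$ and the endpoint velocities of the Wasserstein geodesic between $\mu_s$ and $\nu_s$ to their $s=0$ counterparts. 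Alternatively, the proof of Proposition~\ref{pr:W1vf} can be rerun on $[0,\delta]$ using the initial regularity encoded in (I) in place of parabolic positivity.
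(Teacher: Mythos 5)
Your argument is correct and follows precisely the route the paper intends: the paper itself only states that Proposition~\ref{pr:Wcont} ``is shown in a similar way to Theorem~\ref{th:equiv}'' and then flags, in the remark immediately after the statement, that for (II)~$\Rightarrow$~(I) one needs the $C^1$-regularity of $\rho,\sigma$ to run (the proof of) Proposition~\ref{pr:W1vf} at $t=0$ --- exactly the point you isolate as the main obstacle. Your resolution (either the limiting argument $s\downarrow 0$, or rerunning the proof of Proposition~\ref{pr:W1vf} on $[0,\delta]$ using the assumed regularity at $t=0$) coincides with the paper's remark, which supplies the locally uniform convergence of $\partial\rho_t/\partial x^k\to\partial\rho_0/\partial x^k$ via the linearized parabolic equation and Gaussian estimates.
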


We remark that, in the implication (II) $\Rightarrow$ (I),
we need the $C^1$-regularity of $\rho$ and $\sigma$ to apply
(the proof of) Proposition~\ref{pr:W1vf} at $t=0$.
Indeed, as each spatial derivative $v:=\del\rho/\del x^k$ ($k=1,2,\ldots,n$)
again solves the linear parabolic equation $\del_t v=\Delta^{\nabla(-\rho)}v$ (\cite[Lemma~4.7]{OS}),
the upper Gaussian estimate for the fundamental solution
ensures that $\del\rho_t/\del x^k$ tends to $\del\rho_0/\del x^k$ locally uniformly.

As a corollary to Proposition~\ref{pr:Wcont}, we obtain the $0$-contraction
of gradient curves in a special class of \emph{symmetric} measures
(compare this with Step~\ref{st:lp} in the next section).

\begin{corollary}[Non-expansion for Gaussian measures]\label{cr:Wcont}
Let $(\R^n,\|\cdot\|)$ be a symmetric normed space $($i.e., $\|\!-x\|=\|x\|)$.
Take two probability measures of Gaussian form
\[ d\mu_0(x)=Ca^{-n/2}\exp\bigg(\! -\frac{\|x-y\|^2}{4a} \bigg) dx, \quad
 d\nu_0(x)=Cb^{-n/2}\exp\bigg(\! -\frac{\|x-z\|^2}{4b} \bigg) dx \]
for some $a,b>0$, $y,z \in \R^n$ and the normalizing constant $C>0$.
Then the gradient curves $(\mu_t)_{t \ge 0}$, $(\nu_t)_{t \ge 0}$ of $\Ent$
starting from them satisfies $W_2(\mu_t,\nu_t) \le W_2(\mu_0,\nu_0)$
for all $t \ge 0$.
\end{corollary}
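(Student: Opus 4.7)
The plan is to make the gradient curves fully explicit and then reduce the contraction to a one-dimensional convexity argument. Since $\|\cdot\|$ is symmetric, the reverse heat equation coincides with the heat equation, and the Legendre transform $\cL$ is odd in addition to being positively homogeneous. For $\rho^{a,y}(x):=Ca^{-n/2}\exp(-\|x-y\|^2/(4a))$, these symmetries combined with \eqref{eq:Leg} give $\nabla\rho^{a,y}=-\rho^{a,y}(x-y)/(2a)$, and a direct computation then verifies $\partial_t\rho^{a+t,y}=\Delta\rho^{a+t,y}$ under the evolution $a\mapsto a+t$. Combined with Theorem~\ref{th:gf=hf} and uniqueness of gradient curves of the convex functional $\Ent$ (see \cite{Gi}), this identifies $\mu_t=\rho^{a+t,y}\,dx$ and $\nu_t=\rho^{b+t,z}\,dx$.

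Next, I introduce the affine scaling map $T_t(x):=z+r_t(x-y)$ with $r_t:=\sqrt{(b+t)/(a+t)}$. Positive homogeneity shows $(T_t)_{\sharp}\mu_t=\nu_t$, so $T_t$ is an admissible coupling; the claim is that $T_t$ is the $W_2$-optimal transport from $\mu_t$ to $\nu_t$. To see this, I verify two-point cyclical monotonicity: setting $v_i:=(r_t-1)x_i+(z-r_ty)$ and $u:=x_1-x_2$, the inequality $\|v_1\|^2+\|v_2\|^2\le\|v_1+u\|^2+\|v_2-u\|^2$ follows from convexity of $\|\cdot\|^2/2$ together with $[\cL(v_1)-\cL(v_2)](u)\ge 0$; since $v_1-v_2=(r_t-1)u$, for $r_t>1$ this reduces to the monotonicity of $\cL$, itself a consequence of convexity of $\|\cdot\|^2/2$. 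For $0<r_t<1$, I apply the previous argument to $T_t^{-1}$ (a scaling of rate $1/r_t>1$) from $\nu_t$ to $\mu_t$, then transfer optimality back using $W_2(\mu_t,\nu_t)=W_2(\nu_t,\mu_t)$ and $\|T_t(x)-x\|^2=\|x-T_t(x)\|^2$, both of which rest on the symmetry of $\|\cdot\|$.

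With optimality in hand, substituting $u:=(x-y)/\sqrt{a+t}$ pushes $\mu_t$ forward to the reference measure $d\gamma(u):=Ce^{-\|u\|^2/4}\,du$, and positive homogeneity yields $W_2(\mu_t,\nu_t)^2=F(z-y,c_t)$ where $c_t:=\sqrt{b+t}-\sqrt{a+t}$ and $F(w,c):=\int_{\R^n}\|w+cu\|^2\,d\gamma(u)$. The function $c\mapsto F(w,c)$ is convex (its integrand is a convex function of $c$) and even (by symmetry of $\|\cdot\|$ and $\gamma$ under $u\mapsto -u$), so it attains its minimum at $c=0$ and is monotone non-decreasing in $|c|$. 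Since $|c_t|$ is monotone non-increasing in $t\ge 0$ (its derivative $1/(2\sqrt{b+t})-1/(2\sqrt{a+t})$ has the opposite sign from $c_t$), I conclude $F(z-y,c_t)\le F(z-y,c_0)$, i.e., $W_2(\mu_t,\nu_t)\le W_2(\mu_0,\nu_0)$. The main obstacle is the optimality of $T_t$: the cyclical-monotonicity check is straightforward for $r_t\ge 1$, whereas the regime $r_t<1$ uses the symmetry of the norm in an essential way via the swap to $T_t^{-1}$; without symmetry one would expect the scaling map to fail to be optimal, consistent with the counterexamples of Sections~\ref{sc:dist} and~\ref{sc:nonc}.
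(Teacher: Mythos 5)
Your proposal is correct in outline but takes a genuinely different route from the paper, and I think the comparison is worth recording. The paper fixes $t$, computes $\int_{\R^n} g_{\dot\omega_s}(\dot\omega_s,\nabla_W(-\Ent)(\omega_s))\,d\omega_s$ along the geodesic $(\omega_s)$ from $\mu_t$ to $\nu_t$, shows (using the symmetry of $\|\cdot\|$ and of $\mu_t$) that this quantity is non-increasing in $s$, i.e.\ verifies the $0$-skew convexity inequality~\eqref{eq:omega} for the pair $(\mu_t,\nu_t)$, and then invokes Proposition~\ref{pr:Wcont} (via the first variation formula~\eqref{eq:W1vf}) to conclude. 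You bypass the Wasserstein differential calculus entirely: after identifying $\mu_t=\rho^{a+t,y}\,dx$ and $\nu_t=\rho^{b+t,z}\,dx$ and the optimal map $T_t(x)=z+r_t(x-y)$, you reduce $W_2(\mu_t,\nu_t)^2$ to a one-parameter integral $F(z-y,c_t)$ with $c_t=\sqrt{b+t}-\sqrt{a+t}$ and prove monotonicity by observing that $F(w,\cdot)$ is convex, even (both require the symmetry of the norm), and that $|c_t|$ is non-increasing. This is more elementary, avoids Propositions~\ref{pr:W1vf} and~\ref{pr:Wcont} altogether, and makes the role of the symmetry hypothesis especially transparent.

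One point needs repair: you verify only \emph{two-point} $c$-cyclical monotonicity for the plan $(\id\times T_t)_\sharp\mu_t$. Two-point monotonicity is necessary but not sufficient for optimality of a transport map (e.g.\ for the Euclidean quadratic cost, small rotations are two-point monotone but not optimal). The fix is easy and stays within your framework: the same convexity estimate yields full $k$-point cyclical monotonicity. Writing $v_i:=T_t(x_i)-x_i=(r_t-1)x_i+(z-r_ty)$ and assuming $r_t>1$, convexity of $\tfrac12\|\cdot\|^2$ gives, for any permutation $\sigma$,
\[
\sum_i \|T_t(x_{\sigma(i)})-x_i\|^2
=\sum_i \|v_{\sigma(i)}+(x_{\sigma(i)}-x_i)\|^2
\ge \sum_i \|v_{\sigma(i)}\|^2 + 2\sum_i \cL(v_{\sigma(i)})(x_{\sigma(i)}-x_i),
\]
and the last sum is $\tfrac{2}{r_t-1}\sum_i \cL(v_{\sigma(i)})(v_{\sigma(i)}-v_i)\ge 0$ by the cyclical monotonicity of $\cL=D(\tfrac12\|\cdot\|^2)$, the differential of a convex function. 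The case $r_t<1$ then follows by your swap to $T_t^{-1}$, exactly as you argue. Alternatively, one can directly exhibit the Kantorovich potential $\varphi(x):=\tfrac{1}{2(r_t-1)}\|(r_t-1)(x-y)+(z-y)\|^2$ with $\nabla\varphi=T_t-\id$. Note that the paper itself states the form of the minimal geodesic $(\omega_s)$ without proof, so you are supplying a justification the paper takes for granted; it just needs to be a complete one.
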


\begin{proof}
Without loss of generality, we assume $y=0$ and $a \ge b$.
Solving the heat equation, we observe
\[ d\mu_t(x)=C(t+a)^{-n/2}\exp\bigg(\! -\frac{\|x\|^2}{4(t+a)} \bigg) dx, \
 d\nu_t(x)=C(t+b)^{-n/2}\exp\bigg(\! -\frac{\|x-z\|^2}{4(t+b)} \bigg) dx. \]
Note also that the unique minimal geodesic $(\omega_s)_{s \in [0,1]}$
from $\mu_t$ to $\nu_t$ is given by $(T_s)_{\sharp}\mu_t$, where
\[ T_s(x):=(1-s)x +s\bigg( z+\sqrt{\frac{t+b}{t+a}}x  \bigg). \]
We can explicitly write as
\[ d\omega_s(x)=C\big(t+(1-s)a+sb\big)^{-n/2}
 \exp\bigg(\! -\frac{\|x-sz\|^2}{4(t+(1-s)a+sb)} \bigg) dx. \]
It follows from Lemma~\ref{lm:DEnt} that
\begin{align*}
&\int_{\R^n} g_{\dot{\omega}_s}\big( \dot{\omega}_s,
 \nabla_W(-\Ent)(\omega_s) \big) \,d\omega_s \\
&= \int_{\R^n} g_{T_1(x)-x}\big( T_1(x)-x,
 \big[ \nabla_W(-\Ent)(\omega_s) \big] (T_s(x)) \big) \,d\mu_t(x) \\
&= \frac{1}{2(t+(1-s)a+sb)} \int_{\R^n}
 g_{T_1(x)-x}\big( T_1(x)-x,T_s(x)-sz \big) \,d\mu_t(x) \\
&= -\frac{(1-s)+s\sqrt{(t+b)/(t+a)}}{2(t+(1-s)a+sb)}
 \int_{\R^n} g_{T_1(x)-x}\big( x-T_1(x),x \big) \,d\mu_t(x).
\end{align*}
Observe that the coefficient of the last line is non-increasing in $s$.
Hence it suffices to show that
$\Theta:=\int_{\R^n} g_{T_1(x)-x}(x-T_1(x),x) \,d\mu_t(x)$
(which is independent of $s$) is nonnegative.
If $a=b$, then we find $T_1(x)-x \equiv z$ and $\Theta=0$
by the symmetry of $\mu_t$.
If $a>b$, then we put $z'=s'z=\{ 1-\sqrt{(t+b)/(t+a)} \}^{-1} z$
and deduce $T_{s'}(x) \equiv z'$ (note that $s'>1$).
Thus we have
\[ g_{T_1(x)-x}\big( x-T_1(x),x \big) =\frac{1}{s'}g_{z'-x}(x-z',x)
 =\frac{1}{2s'}[D(\|z'-\cdot\|^2)(x)](x), \]
and $[D(\|z'-\cdot\|^2)(x)](x)+[D(\|z'-\cdot\|^2)(-x)](-x) \ge 0$
by the convexity of $\|z'-\cdot\|^2$ (along with the symmetry of $\|\cdot\|$).
Therefore we obtain $\Theta \ge 0$, and Proposition~\ref{pr:Wcont}
completes the proof.
$\qedd$
\end{proof}

\section{Non-contraction of heat flow}\label{sc:nonc}

This section is devoted to a proof of Theorem~\ref{th:nonc}.
For notational simplicity, we prove this for the reverse norm.
That is to say, global solutions to the reverse heat equation \eqref{eq:heat}
are not $K$-contractive with respect to $W_2$.

Fix $\mu=\rho \,dx \in \cP^{\ac}_2(\R^n)$ such that
$\rho \in H^1_0(\R^n) \cap C^1(\R^n)$, $\Ent(\mu)<\infty$
and that $\|\nabla(-\rho)\|/\rho \in L^2(\R^n,\mu)$.
For $T >1$, we set
\[ \omega_s=\rho^s \,dx:=(\cF_{s/T})_{\sharp}\mu, \quad
 \cF_{s/T}(x):=\bigg( 1-\frac{s}{T} \bigg) x\quad
 {\rm for}\ s \in [0,T]. \]
Then $(\omega_s)_{s \in [0,T]}$ is the unique minimal geodesic from $\mu$
to the Dirac measure $\delta_O$ at the origin $O \in \R^n$,
and its tangent vector field $\dot{\omega}_s$ is simply given by
$\dot{\omega}_s(x)=-x/(T-s)$.
Put $\nu=\omega_1$.
We will show that \eqref{eq:omega} is false for any given $K \in \R$
(i.e., $\Ent$ is not $K$-skew convex) by choosing suitable $\rho$.

We deduce from $\nabla_W(-\Ent)(\omega_s)=\nabla(-\rho^s)/\rho^s$
(Lemma~\ref{lm:DEnt}) that
\[ \int_{\R^n} g_{\dot{\omega}_s}
 \big( \dot{\omega}_s,\nabla_W(-\Ent)(\omega_s) \big) \,d\omega_s
 =\frac{1}{T-s} \int_{\supp\rho^s} g_{-x}\big(\! -x,\nabla(-\rho^s)(x) \big) \,dx. \]
It follows from $\rho^s(x)=(T/(T-s))^n \rho(Tx/(T-s))$
and the change of variables formula that
\begin{align*}
\int_{\supp\rho^s} g_{-x}\big( x,\nabla(-\rho^s)(x) \big) \,dx
&= \int_{\supp\rho^s} g_{-x}\bigg( x,\bigg( \frac{T}{T-s} \bigg)^{n+1}
 \nabla(-\rho)\bigg( \frac{Tx}{T-s} \bigg) \bigg) \,dx \\
&= \int_{\supp\rho} g_{-x}\big( x,\nabla(-\rho)(x) \big) \,dx.
\end{align*}
Thus we have
\[ \int_{\R^n} g_{\dot{\omega}_s}
 \big( \dot{\omega}_s,\nabla_W(-\Ent)(\omega_s) \big) \,d\omega_s
 =\frac{1}{T-s} \int_{\supp\rho} g_{-x}\big(\! -x,\nabla(-\rho)(x) \big) \,dx, \]
and hence
\[ \frac{d}{ds} \bigg[ \int_{\R^n} g_{\dot{\omega}_s}
 \big( \dot{\omega}_s,\nabla_W(-\Ent)(\omega_s) \big) \,d\omega_s \bigg]
 = \frac{1}{(T-s)^2} \int_{\supp\rho}g_{-x}\big(\! -x,\nabla(-\rho)(x) \big) \,dx. \]
Note that
$W_2(\omega_0,\omega_1)^2=T^{-2}\int_{\R^n} \|\!-x\|^2 \rho(x) \,dx$
and, by putting $f(x):=\|\!-x\|^2/2$ (as in Section~\ref{sc:dist}),
\[ g_{-x}\big(\! -x,\nabla(-\rho)(x) \big) =[D(-f)(x)]\big( \nabla(-\rho)(x) \big). \]
We set
\begin{equation}\label{eq:nonc}
\Theta(\rho) :=
 \frac{\int_{\supp\rho} [D(-f)(x)](\nabla(-\rho)(x))\,dx}
 {\int_{\R^n} \|\!-x\|^2 \rho(x) \,dx}
\end{equation}
and shall demonstrate that $\Theta(\rho)$ can be positive
(Steps~\ref{st:lp}--\ref{st:ndim} below) and even arbitrarily large
(Step~\ref{st:scal}) by choosing suitable $\rho$, unless $\|\cdot\|$
is an inner product.
This means that \eqref{eq:omega} is false for any $K \in \R$,
and completes the proof of Theorem~\ref{th:nonc}.
We start with an explicit example describing the heart of our construction.

\setcounter{step}{-1}
\begin{step}[The model case of $\ell^2_p$ with $2<p<\infty$]\label{st:lp}
Let $\|\cdot\|$ be the $\ell_p$-norm of $\R^2$ such that $2<p<\infty$.
Take the unit vectors $a=(-1,0)$, $b=(2^{-1/p},2^{-1/p})$,
$c=(2^{-1/p},-2^{-1/p})$ and let $\triangle ABC$ be the triangle
tangent to the unit sphere of $\|\cdot\|$ at $a,b,c$.
Precisely, $A=(2^{1-1/p},0)$, $B=(-1,-1-2^{1-1/p})$ and $C=(-1,1+2^{1-1/p})$
(Figure~3).

\begin{center}
\begin{picture}(400,220)
\put(180,5){Figure~3}

\qbezier(165,87)(190,81)(215,87)
\qbezier(165,153)(190,159)(215,153)
\qbezier(223,145)(229,120)(223,95)
\qbezier(157,145)(151,120)(157,95)
\qbezier(165,87)(159,89)(157,95)
\qbezier(165,153)(159,151)(157,145)
\qbezier(215,87)(221,89)(223,95)
\qbezier(215,153)(221,151)(223,145)

\put(189,119){\rule{2pt}{2pt}}
\put(153,119){\rule{2pt}{2pt}}
\put(219,149){\rule{2pt}{2pt}}
\put(219,89){\rule{2pt}{2pt}}

\thicklines
\put(154,24){\line(0,1){192}}
\qbezier(154,24)(220,90)(250,120)
\qbezier(154,216)(220,150)(250,120)
\put(190,120){\line(1,0){60}}
\qbezier(190,120)(172,72)(154,24)
\qbezier(190,120)(172,168)(154,216)

\put(190,120){\vector(1,1){20}}
\put(190,120){\vector(1,-1){20}}
\put(190,120){\vector(-1,0){24}}

\put(255,120){$A(2^{1-1/p},0)$}
\put(170,23){$B(-1,-1-2^{1-1/p})$}
\put(170,210){$C(-1,1+2^{1-1/p})$}
\put(105,120){$a(-1,0)$}
\put(230,150){$b(2^{-1/p},2^{-1/p})$}
\put(230,85){$c(2^{-1/p},-2^{-1/p})$}
\put(174,124){$O$}

\end{picture}
\end{center}

Define the nonnegative function $\hat{\rho}:\R^2 \lra [0,\infty)$ by
$\hat{\rho}:=0$ outside $\triangle ABC$ and by $\hat{\rho}(tx):=(1-t)\sigma$
for a point $x$ on the edges of $\triangle ABC$ and for $t \in [0,1]$,
where the constant $\sigma>0$ is chosen so that $\int_{\R^2} \hat{\rho} \,dx=1$.
Note that the gradient vector $\nabla(-\hat{\rho})$ is
$\sigma \cdot a=(-\sigma,0)$ inside $\triangle OBC$,
$\sigma \cdot b=(2^{-1/p}\sigma,2^{-1/p}\sigma)$ inside $\triangle OAC$,
and $\sigma \cdot c=(2^{-1/p}\sigma,-2^{-1/p}\sigma)$ inside $\triangle OAB$.
Hence we have
\begin{align}
\int_{\triangle ABC} \nabla(-\hat{\rho}) \,dx
&=(1+2^{1-1/p}) \cdot (-\sigma,0)
 +2^{1-1/p}(1+2^{1-1/p}) \cdot (2^{-1/p}\sigma,0) \nonumber\\
&=(1+2^{1-1/p})\sigma \cdot (2^{1-2/p}-1,0). \label{eq:lp}
\end{align}
Note that $2^{1-2/p}-1>0$ since $p>2$.

Now, for large $R>1$, we consider the function
$\hat{\rho}_R(x):=\hat{\rho}(x+(R,0))$.
Then it follows from \eqref{eq:lp} and $\nabla(-f)(x)=-x$ that
\[ \lim_{R \to \infty} \int_{\supp\hat{\rho}_R}
 \bigg[ D\bigg( \frac{-f}{R} \bigg) (x) \bigg] \big( \nabla(-\hat{\rho}_R)(x) \big) \,dx
 =(1+2^{1-1/p})(2^{1-2/p}-1)\sigma >0. \]
Therefore, by taking a smooth approximation of $\hat{\rho}_R$
(satisfying the conditions imposed on $\rho$ at the beginning of the section)
for sufficiently large $R$, we find $\rho$ satisfying $\Theta(\rho)>0$.
\end{step}

\begin{step}[General two-dimensional case]\label{st:2dim}
The argument in Step~\ref{st:lp} shows the following claim.
We will denote by $\bS(1)$ the unit sphere of the norm $\|\cdot\|$.

\begin{claim}\label{cl:nonc}
Suppose that a Minkowski space $(\R^2,\|\cdot\|)$ admits a triangle
$\triangle ABC$ such that edges $AB, BC, CA$ are tangent to
$\bS(1)$ at points $c, a, b$, respectively, and that the vector
\begin{equation}\label{eq:ABC}
|\triangle OAB| \cdot c +|\triangle OBC| \cdot a +|\triangle OCA| \cdot b
\end{equation}
is nonzero, where $|\triangle OAB|$ denotes the area of $\triangle OAB$
with respect to the Lebesgue measure.
Then there exists a function $\rho$ for which $\Theta(\rho)$
as in \eqref{eq:nonc} is positive.
\end{claim}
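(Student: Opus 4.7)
The plan is to reduce the claim to a single explicit, piecewise-linear ``tent" construction and a linear translation argument, essentially abstracting what the $\ell^2_p$ computation in Step~\ref{st:lp} does. Let $\sigma>0$ be the normalizing constant that makes the continuous tent function $\hat\rho:\R^2\to[0,\infty)$ supported on $\triangle ABC$, with $\hat\rho(O)=\sigma$, $\hat\rho\equiv 0$ outside $\triangle ABC$, and $\hat\rho(tx)=(1-t)\sigma$ for $x$ on the edges and $t\in[0,1]$, a probability density. Then $\hat\rho$ is affine on each of the three sub-triangles $\triangle OBC$, $\triangle OCA$, $\triangle OAB$, so $\nabla(-\hat\rho)$ is a constant vector on each.

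The main computation I would do first is: on the sub-triangle $\triangle OBC$ one has $\nabla(-\hat\rho)=\sigma\,a$, and analogously $\sigma\,b$ on $\triangle OCA$ and $\sigma\,c$ on $\triangle OAB$. To see this, if the line carrying the edge $BC$ is written as $\{x\in\R^2:\eta(x)=1\}$ for some linear functional $\eta$, then on $\triangle OBC$ one has $\hat\rho(x)=\sigma(1-\eta(x))$, so $D(-\hat\rho)=\sigma\eta$. Because $BC$ is tangent to $\bS(1)$ at $a$, the functional $\eta$ is (up to a positive scalar) the supporting functional at $a$, normalized by $\eta(a)=1=\|a\|^2$; this forces $\eta=\cL(a)$ via \eqref{eq:Leg}. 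Applying $\cL^*$ gives $\nabla(-\hat\rho)=\cL^*(\sigma\cL(a))=\sigma a$. Integrating over $\triangle ABC$ then yields
\[
\int_{\triangle ABC}\nabla(-\hat\rho)(x)\,dx
=\sigma\bigl(|\triangle OBC|\cdot a+|\triangle OCA|\cdot b+|\triangle OAB|\cdot c\bigr)=\sigma V,
\]
where $V\neq 0$ by hypothesis.

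Next I would translate: fix a unit vector $v_0\in\R^2$ and set $\hat\rho_R(x):=\hat\rho(x-Rv_0)$ for large $R>0$. Since $\nabla(-f)(x)=-x$ and $\cL$ is positively $1$-homogeneous,
\[
\frac{1}{R}\,[D(-f)(x)]\bigl(\nabla(-\hat\rho_R)(x)\bigr)
=\bigl[\cL(-x/R)\bigr]\bigl(\nabla(-\hat\rho_R)(x)\bigr)
\longrightarrow [\cL(-v_0)]\bigl(\nabla(-\hat\rho)(x-Rv_0)\bigr)
\]
uniformly for $x$ in the bounded support of $\hat\rho_R$ as $R\to\infty$. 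Integrating and using the preceding computation,
\[
\lim_{R\to\infty}\frac{1}{R}\int_{\supp\hat\rho_R}[D(-f)(x)]\bigl(\nabla(-\hat\rho_R)(x)\bigr)dx
=\sigma\,[\cL(-v_0)](V).
\]
Since $V\neq 0$ and $\cL$ is a bijection between $\R^2$ and its dual, one may pick $v_0$ (of any convenient length, then rescale to unit) so that $[\cL(-v_0)](V)>0$. The denominator of $\Theta(\hat\rho_R)$ in \eqref{eq:nonc} is $\int\|-x\|^2\hat\rho_R\,dx$, which grows like $R^2$ times a positive constant; the numerator grows like $R$ times a positive constant, so the ratio is positive for all sufficiently large $R$.

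The only remaining step is to upgrade $\hat\rho_R$ to an admissible density satisfying the regularity assumptions imposed at the beginning of Section~\ref{sc:nonc} (namely $\rho\in H^1_0(\R^n)\cap C^1(\R^n)$, $\Ent<\infty$, $\|\nabla(-\rho)\|/\rho\in L^2(\R^n,\mu)$). This is done by a standard mollification of $\hat\rho_R$ and a small positive lower cutoff (or multiplication by a smooth bump that is positive on a neighborhood of $\supp\hat\rho_R$), followed by renormalization. I expect this to be the main technical nuisance rather than a genuine obstacle: both numerator and denominator of $\Theta$ depend continuously on $\rho$ in an appropriate sense, so a sufficiently fine approximation preserves $\Theta(\rho)>0$. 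Thus for the approximated density, $\Theta(\rho)>0$, proving the claim.
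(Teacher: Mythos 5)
Your proposal is correct and follows the route the paper intends: the paper proves Claim~\ref{cl:nonc} only by the remark ``the argument in Step~\ref{st:lp} shows the following claim,'' and your write-up is precisely that argument made general---identifying $\nabla(-\hat\rho)=\sigma a$ on $\triangle OBC$ (and cyclically) via the supporting functional $\cL(a)$ of the tangent edge, integrating to obtain $\sigma V$, and then translating in a direction $v_0$ chosen so that the $1$-homogeneous limit $[\cL(-v_0)](V)$ is positive, which is possible because $\cL$ is a bijection and $V\neq 0$. The only place to be a bit careful (the paper is equally brief here) is the final smoothing: a straight mollification of the Lipschitz tent need not make $\|\nabla(-\rho)\|/\rho\in L^2(\R^n,\mu)$, since the tent's Fisher information is already infinite near $\partial\triangle ABC$; you should take an approximant vanishing to sufficiently high order at the boundary of its support (e.g.\ a $C^\infty_c$ bump with flat decay), for which both the numerator and denominator of $\Theta$ converge to the tent values and the regularity hypotheses at the start of Section~\ref{sc:nonc} are met.
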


Note that \eqref{eq:ABC} is always zero in inner product spaces.
(Indeed, for the standard inner product, it holds that
$\langle |\triangle OAB| \cdot c +|\triangle OBC| \cdot a
 +|\triangle OCA| \cdot b, e_i \rangle=0$
for $e_1=(1,0)$ and $e_2=(0,1)$ by the fundamental theorem of calculus
applied to the function $\hat{\rho}$ defined as in Step~\ref{st:lp}.)
Claim~\ref{cl:nonc} is sharp enough for our purpose,
as we can certainly verify the following.

\begin{claim}\label{cl:tri}
There exists a triangle $\triangle ABC$ satisfying the condition in
Claim~$\ref{cl:nonc}$ unless $\|\cdot\|$ is an inner product.
\end{claim}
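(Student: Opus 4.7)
The plan is to prove the contrapositive of Claim~\ref{cl:tri}: assuming that the vector in \eqref{eq:ABC} vanishes for every triangle $\triangle ABC$ circumscribing $\bS(1)$ (with tangent points $a,b,c$ and $O\in\mathrm{int}(\triangle abc)$), I will conclude that $\|\cdot\|$ arises from an inner product. The starting point is a \emph{dual} identity that holds unconditionally. Applying the divergence theorem componentwise to the piecewise-linear function $\hat{\rho}$ of Step~\ref{st:lp}, which vanishes on $\partial(\triangle ABC)$ so that $\int_{\triangle ABC}D\hat{\rho}\,dx=0$, and observing that the tangent line through $a$ has equation $[\cL(a)](x)=1$ (hence $D\hat{\rho}=-\sigma\cL(a)$ on $\triangle OBC$, and similarly on the other two sub-triangles), one obtains
\[
|\triangle OBC|\cdot\cL(a)+|\triangle OCA|\cdot\cL(b)+|\triangle OAB|\cdot\cL(c)=0
\]
for every such triangle. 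This is precisely the parenthetical computation invoked by the authors in the inner-product case, now performed in the dual picture for an arbitrary norm.

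Next I will combine the hypothesis with this dual identity. Fix linearly independent $p_1,p_2\in\bS(1)$ and $\lambda,\mu>0$, and set
\[
q:=-\frac{\lambda p_1+\mu p_2}{\|\!-(\lambda p_1+\mu p_2)\|}\in\bS(1).
\]
Since $O$ lies in the interior of $\triangle p_1p_2q$, this yields a valid triple. The relation $\lambda p_1+\mu p_2+\|\!-(\lambda p_1+\mu p_2)\|\,q=0$ is, up to a positive scalar, the unique linear relation among $(p_1,p_2,q)$ in $\R^2$; by hypothesis the area-weighted primal sum is also a relation, so the area triple $(|\triangle OBC|, |\triangle OCA|, |\triangle OAB|)$ must be proportional to $(\lambda,\mu,\|\!-(\lambda p_1+\mu p_2)\|)$. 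Dividing through in the dual identity and using positive homogeneity $\|\!-(\lambda p_1+\mu p_2)\|\cL(q)=\cL(-(\lambda p_1+\mu p_2))$ yields the pointwise identity
\[
\cL\bigl(-(x+y)\bigr)=-\cL(x)-\cL(y)
\]
for every linearly independent $x,y\in\R^2\setminus\{0\}$ (taking $x=\lambda p_1$, $y=\mu p_2$ and invoking positive homogeneity in each factor).

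Letting $y\to 0$ along a direction transverse to $x$ and using continuity of $\cL$ at $-x\neq 0$ together with $\cL(y)=\|y\|\cL(y/\|y\|)\to 0$ gives oddness $\cL(-x)=-\cL(x)$; substituting back produces additivity $\cL(x+y)=\cL(x)+\cL(y)$ on linearly independent pairs, and together with the now-$\R$-homogeneity of $\cL$ this extends to full $\R$-linearity on $\R^2$. Then $\|v\|^2=[\cL(v)](v)=\langle Av,v\rangle$ for $A$ the matrix of $\cL$, so $\|\cdot\|^2$ is a quadratic form whose symmetric part $(A+A^T)/2$ is positive definite by strong convexity \eqref{eq:ell}, and $\|\cdot\|$ is an inner-product norm. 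The main obstacle is the identification in the middle paragraph: one has to see that uniqueness of linear relations in $\R^2$ forces the two area-weighted relations (one hypothesized to vanish in the primal, one unconditionally vanishing in the dual) to be proportional to the same explicit triple $(\lambda,\mu,\|\!-(\lambda p_1+\mu p_2)\|)$; once that is spotted, the limit argument for oddness and the promotion to linearity are routine.
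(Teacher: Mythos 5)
Your strategy is genuinely different from the paper's (which splits into symmetric/nonsymmetric cases and runs an ad hoc comparison with ellipses) and is in my view conceptually cleaner: derive the unconditional ``dual'' identity by the divergence theorem, combine with the hypothesis via uniqueness of linear relations in $\R^2$, and conclude that the Legendre transform is linear. The dual identity itself, the uniqueness-of-relations step, and the limiting argument for oddness and linearity are all correct. However, there is one genuine gap, in the sentence ``Since $O$ lies in the interior of $\triangle p_1p_2q$, this yields a valid triple.'' That implication is false. A triangle circumscribing $\bS(1)$ with tangent points $p_1,p_2,q$ is \emph{bounded} (so that the areas $|\triangle OAB|$ etc.\ are finite and the hypothesis of Claim~\ref{cl:nonc} can be applied) if and only if the dual vectors $\cL(p_1),\cL(p_2),\cL(q)$ positively span $(\R^2)^*$, i.e.\ $O\in\mathrm{int}\bigl(\mathrm{conv}(\cL(p_1),\cL(p_2),\cL(q))\bigr)$; this is \emph{not} implied by $O\in\mathrm{int}(\triangle p_1p_2q)$. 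Indeed one can build a Minkowski norm where it fails: prescribe the Euclidean normal-angle map along $\bS(1)$ by $\phi(\theta)=\theta+c\sin\theta$ with $c$ close to (but less than) $1$, which corresponds to a smooth strongly convex body via $r'/r=-\tan(c\sin\theta)$; then the triple at angles $\theta_1\approx-\pi/2$, $\theta_2\approx\pi/2$, $\theta_3=\pi$ has all $\theta$-gaps $<\pi$ while the gap $\phi(\theta_2)-\phi(\theta_1)\approx\pi+2c>\pi$, so the normals $\cL(p_i)$ lie in a closed half-plane and the three tangent lines do not bound a compact triangle. Since this failure set is open, you cannot simply appeal to density and continuity to extend the identity $\cL(-(x+y))=-\cL(x)-\cL(y)$ to all linearly independent pairs.

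The good news is that your argument is rescued by running the same computation with the dual parametrization, which is the natural one since the triangle is determined by its supporting functionals, not by the tangent points. Fix linearly independent $\ell_1,\ell_2\in\bS^*(1)$ and $\lambda,\mu>0$, and set $\ell_3:=-(\lambda\ell_1+\mu\ell_2)/\nu$ with $\nu:=\|\!-(\lambda\ell_1+\mu\ell_2)\|_*$, so that $\lambda\ell_1+\mu\ell_2+\nu\ell_3=0$ with all coefficients positive. Then $\ell_1,\ell_2,\ell_3$ positively span $(\R^2)^*$, hence the half-planes $\{x:\ell_i(x)\le1\}$ bound a compact triangle $\triangle ABC$ containing the unit ball, with tangent points $p_i=\cL^*(\ell_i)$. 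Your dual identity (unchanged) reads $|\triangle OBC|\,\ell_1+|\triangle OCA|\,\ell_2+|\triangle OAB|\,\ell_3=0$; uniqueness of relations gives the area triple proportional to $(\lambda,\mu,\nu)$; and substituting into the hypothesized primal identity $|\triangle OBC|\,p_1+|\triangle OCA|\,p_2+|\triangle OAB|\,p_3=0$ yields $\cL^*(\lambda\ell_1)+\cL^*(\mu\ell_2)+\cL^*\bigl(-(\lambda\ell_1+\mu\ell_2)\bigr)=0$. Your limiting argument then shows that $\cL^*$, hence $\cL=(\cL^*)^{-1}$, is $\R$-linear, and the conclusion follows as you wrote. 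So the approach is sound and yields a shorter, more structural proof than the paper's; it just needs to be stated on the dual side.
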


Although this claim should be a known fact (and there would be a simpler proof),
we give a proof for completeness.
We first treat the easier case of nonsymmetric norms.
Choose a pair $a,b \in \bS(1)$ such that $b=-\lambda a$ with $\lambda \neq 1$.
If the tangent lines of $\bS(1)$ at $a$ and $b$ are not parallel in $\R^2$,
then we draw a triangle $\triangle ABC$ in such a way that
the edge $AB$ is parallel to $ba$.
As the vectors $a$ and $c$ are linearly independent,
\eqref{eq:ABC} is not zero.
In the other case where the tangent lines of $\bS(1)$ at $a$
and $b$ are parallel, we take $C'$ such that $OC'$ is parallel to
these tangent lines, and draw the triangle $\triangle A'B'C'$ such that
$A'B'$ is parallel to $b'a'$ ($a'$ and $b'$ are determined only by $C'$).
By letting $C'$ go to infinity, $a'$ and $b'$ can be arbitrarily close to
$a$ and $b$, respectively.
Then we observe that $|\triangle OA'B'|$ is much smaller than $|\triangle OB'C'|$
and $|\triangle OC'A'|$, and that the ratio $|\triangle OC'A'|/|\triangle OB'C'|$
is close to $\lambda$.
Thus we have
\[ |\triangle OA'B'| \cdot c' +|\triangle OB'C'| \cdot a' +|\triangle OC'A'| \cdot b'
 \approx |\triangle OB'C'|(1-\lambda^2) \cdot a \neq 0. \]

Next we consider symmetric norms.
We suppose that the sum \eqref{eq:ABC} is always zero,
and will see that $\|\cdot\|$ must be an inner product.
Take $a,b \in \bS(1)$ with $b=-a$ such that $|a|=\sup_{x \in \bS(1)}|x|$,
where $|\cdot|$ is the Euclidean norm.
Then the tangent lines of $\bS(1)$ at $a$ and $b$ are perpendicular
to $ab$ with respect to the Eulidean inner product.
As in the nonsymmentric case, we take $C$ so that $OC$
is parallel to these tangent lines, and consider the triangle $\triangle A'B'C$
for some fixed $c$.
Let $C$ diverge to infinity and denote the limits of $A',B'$ by $A,B$.
Then our hypothesis yields that the vector
\begin{equation}\label{eq:abc}
|\triangle OaB| \cdot a +|\triangle OAb| \cdot b +|\triangle OAB| \cdot c
\end{equation}
is independent of the choice of $c$ on the arc between $a$ and $b$ opposite to $C$
(since $\triangle a'b'C$ corresponding to $\triangle A'B'C$ was independent of the choice of $c$).
We will see that this is the case only for inner products.
For simplicity, we assume that $a=(-1,0)$, $b=(1,0)$ and that
$c$ is in the upper half plane.
Define the function $h:[-1,1] \lra [0,1]$ by $\|(t,h(t))\| \equiv 1$,
and compare this with the function $\tilde{h}:[-1,1] \lra [0,1]$
such that $\{(t,\tilde{h}(t))\}_{t \in [-1,1]}$ draws (the upper half of)
the ellipse having $ab$ and $OD_0$ as its long and short axes,
where $D_0=(0,\sup h)$ (Figure~4).
We first suppose that $\sup h$ is attained at $t_0>0$, and put
$c_0=(t_0,h(t_0))$, $A_0=(1,h(t_0))$ and $B_0=(-1,h(t_0))$.
Then, on the one hand, clearly the $y$-components of the vectors 
$|\triangle OA_0B_0| \cdot c_0$ and $|\triangle OA_0B_0| \cdot D_0$ are the same.
On the other hand, since only $c$ has a nonzero $y$-component in \eqref{eq:abc},
$|\triangle OAB| \cdot c$ and $|\triangle OA_0 B_0| \cdot c_0$ have the same $y$-component.
Similarly, for any $t' \in (-1,1)$ and points $D:=(t',\tilde{h}(t'))$,
$\widetilde{A}:=(1,\tilde{h}(t')+(1-t')\tilde{h}'(t'))$ and
$\widetilde{B}:=(-1,\tilde{h}(t')-(1+t')\tilde{h}'(t'))$ corresponding to the ellipse
drawn by $\tilde{h}$, $|\triangle O\widetilde{A}\widetilde{B}| \cdot D$ has
the same $y$-component as $|\triangle OA_0 B_0| \cdot D_0$.
Hence we have
\[ |\triangle OAB| \cdot c
 -|\triangle O\widetilde{A}\widetilde{B}| \cdot D \in \R \times \{0\}. \]

\begin{center}
\begin{picture}(400,200)
\put(180,5){Figure~4}

\put(50,30){\line(1,0){300}}
\put(100,100){\line(1,0){200}}
\put(100,30){\line(0,1){150}}
\put(200,30){\line(0,1){150}}
\put(300,30){\line(0,1){150}}
\put(100,90){\line(1,0){200}}

\qbezier(100,30)(110,95)(200,100)
\qbezier(200,100)(290,95)(300,30)
\qbezier(100,149)(200,110)(300,71)

\put(199,99){\rule{2pt}{2pt}}
\put(251,89){\rule{2pt}{2pt}}

\thicklines
\qbezier(100,30)(105,50)(120,65)
\qbezier(120,65)(130,75)(160,88)
\qbezier(160,88)(195,100)(220,100)
\qbezier(220,100)(230,100)(267,90)
\qbezier(267,90)(300,75)(300,30)

\qbezier(100,140)(200,110)(300,80)

\put(219,99){\rule{2pt}{2pt}}
\put(266,89){\rule{2pt}{2pt}}

\put(188,35){$O$}
\put(127,55){$h$}
\put(110,75){$\tilde{h}$}
\put(305,100){$A_0$}
\put(305,78){$A$}
\put(305,60){$\widetilde{A}$}
\put(83,100){$B_0$}
\put(85,137){$B$}
\put(85,153){$\widetilde{B}$}
\put(268,93){$c$}
\put(245,77){$D$}

\put(97,17){$a$}
\put(297,17){$b$}
\put(220,110){$c_0$}
\put(183,103){$D_0$}

\end{picture}
\end{center}

In particular, for any $c=(t,h(t))$ with $t \in (t_0,1)$ and $t' \in (0,1)$ with
$\tilde{h}(t')=h(t)$, we obtain
$|\triangle OAB|=|\triangle O\widetilde{A} \widetilde{B}|$ and hence
$\tilde{h}'(t')<h'(t)$ (more precisely, $AB$ and $\widetilde{A}\widetilde{B}$
must intersect on the $y$-axis).
However, this is a contradiction since $\tilde{h}(1)=h(1)=0$.
We similarly derive a contradiction from $t_0<0$, so that $t_0=0$.
Furthermore, $h$ must coincide with $\tilde{h}$ everywhere by a similar discussion.
Therefore $\|\cdot\|$ is an inner product and we complete the proof of
Claim~\ref{cl:tri}.
\end{step}

\begin{step}[$n$-dimensional case with $n \ge 3$]\label{st:ndim}
Suppose that $(\R^n,\|\cdot\|)$ is not an inner product space.
Then there is a two-dimensional subspace $P \subset \R^n$ in which
the restriction of $\|\cdot\|$ is not an inner product.
We assume $P=\{ (x,y,0,\ldots,0) \,|\, x,y \in \R \}$ for brevity,
and sometimes identify this with $\R^2$.
By Step~\ref{st:2dim}, there is a function
$\rho_R:(\R^2,\|\cdot\| |_P) \lra [0,\infty)$ such that $\int_{\R^2} \rho_R \,dx=1$,
$\supp\rho_R \subset B((-R,0),r)$ for some fixed $r>0$ and that
\[ \lim_{R \to \infty} \int_{\supp \rho_R} \bigg[ D\bigg( \frac{-f}{R} \bigg) (x) \bigg]
 \big( \nabla(-\rho_R)(x) \big) \,dx >0, \]
where we set $B(z,r):=\{w \in \R^2 \,|\, \|w-z\|<r \}$ for $z \in \R^2$ and $r>0$.
Using a smooth cut-off function $\eta_R:\R^{n-2} \lra [0,\infty)$
such that $\eta_R \equiv 1$ on $B(O,\sqrt{R})$, $\supp\eta_R \subset B(O,\sqrt{R}+1)$
and that $\sup\|\nabla(-\eta_R)\| <2$, define $\rho:(\R^n,\|\cdot\|) \lra [0,\infty)$ by
\[ \rho(x,y):=\bigg( \int_{\R^{n-2}}\eta_R \,dz \bigg)^{-1}\rho_R(x) \eta_R(y) \]
for $x \in \R^2$ and $y \in \R^{n-2}$.
Note that
$\nabla(-\rho)(x,y)=(\int_{\R^{n-2}}\eta_R\,dz)^{-1} \cdot \nabla(-\rho_R)(x)$
for $y \in B(O,\sqrt{R}) \subset \R^{n-2}$.
Hence we have $\Theta(\rho)>0$ since the effect of the boundary
of the cut-off is negligible for large $R$.
Indeed, we observe that
\[ |B(O,\sqrt{R}+1) \setminus B(O,\sqrt{R})| \cdot
 \bigg( \int_{\R^{n-2}}\eta_R\,dz \bigg)^{-1}
 =O\big( (\sqrt{R})^{n-3} / (\sqrt{R})^{n-2} \big) \to 0 \]
as $R$ goes to infinity.
\end{step}

\begin{step}[Scaling]\label{st:scal}
Suppose that there is $\rho$ with $\Theta(\rho)>0$, and set
$\rho_{\ve}(x):=\ve^{-n} \rho(\ve^{-1}x)$ for $\ve>0$.
Then we have
\begin{align*}
\int_{\supp\rho_{\ve}}
 [D(-f)(x)]\big( \nabla(-\rho_{\ve})(x) \big) \,dx
&= \ve^{-(n+1)} \int_{\supp\rho_{\ve}}
 [D(-f)(x)]\big( \nabla(-\rho)(\ve^{-1}x) \big) \,dx \\
&= \int_{\supp\rho} [D(-f)(x)]\big( \nabla(-\rho)(x) \big) \,dx
\end{align*}
and
\[ \int_{\R^n} \|\!-x\|^2 \rho_{\ve}(x) \,dx
 =\ve^{-n} \int_{\R^n} \|\!-x\|^2 \rho(\ve^{-1}x) \,dx
 =\ve^2 \int_{\R^n} \|\!-x\|^2 \rho(x) \,dx. \]
Therefore $\Theta(\rho_{\ve})=\ve^{-2}\Theta(\rho)$ and
it diverges to infinity as $\ve$ tends to zero.
Thus we complete the proof of Theorem~\ref{th:nonc}.
\end{step}

\renewcommand{\thesection}{\Alph{section}}
\setcounter{section}{1}
\setcounter{theorem}{0}
\setcounter{equation}{0}
\section*{Appendix: Skew convexity of distance functions on Finsler manifolds}

We finally investigate the skew convexity of squared distance functions on Finsler manifolds.
This would be of independent interest from the geometric viewpoint.
The convexity of distance functions is closely related to upper bounds
of the sectional curvature in the Riemannian case.
In our Finsler setting, we need two more quantities to control the distance function.
See \cite{Shlec} and \cite{Ouni} for related work on the usual convexity
and concavity along geodesics.

Let $(M,F)$ be a $C^{\infty}$-Finsler manifold.
We introduce some terminologies for which we refer to \cite{BCS}.
For a $C^1$-vector field $X$ on $M$ and tangent vectors $v,w \in T_xM$ with $w \neq 0$,
we define the \emph{covariant derivative} of $X$ by $v$ with \emph{reference vector} $w$ as
\[ (D^w_v X)(x):=\sum_{i,j=1}^n \bigg\{ v^j \frac{\del X^i}{\del x^j}(x)
 +\sum_{k=1}^n \Gamma^i_{jk}(w) v^j X^k(x) \bigg\}
 \frac{\del}{\del x^i}\Big|_x, \]
where $\Gamma^i_{jk}$ is the Christoffel symbol.
If $\Gamma^i_{jk}(w)$ depends only on the point $x$ (i.e., independent
of the choice of $w \in T_xM \setminus \{0\}$) for all $x \in M$,
then we say that $(M,F)$ is of \emph{Berwald type}.
In a Berwald space, all tangent spaces are isometric to each other.
For instance, Riemannian manifolds and Minkowski spaces are of Berwald type.

By using the covariant derivative, the geodesic equation is written
in a canonical way as
$D^{\dot{\gamma}}_{\dot{\gamma}} \dot{\gamma} \equiv 0$.
We will use the following formula borrowed from
\cite[Exercise~10.1.2]{BCS}:
\begin{equation}\label{eq:cov}
\frac{d}{dt}g_V(V,W)
 =g_V(D_{\dot{\gamma}}^V V,W) +g_V(V,D_{\dot{\gamma}}^V W)
\end{equation}
for any $C^1$-curve $\gamma$ and $C^1$-vector fields $V,W$ along $\gamma$
such that $V \not\equiv 0$.

A $C^{\infty}$-vector field $V$ along a geodesic $\gamma:[0,l] \lra M$
is called a \emph{Jacobi field} if it satisfies the equation
$D^{\dot{\gamma}}_{\dot{\gamma}} D^{\dot{\gamma}}_{\gamma}V
 +\cR(V,\dot{\gamma})\dot{\gamma} \equiv 0$,
where $\cR:TM \otimes TM \lra T^*M \otimes TM$ is the curvature tensor.
Similarly to the Riemannian case, the variational vector field of a geodesic variation
is a Jacobi field (and vice versa).
For linearly independent vectors $v,w \in T_xM$, the \emph{flag curvature}
is defined by
\[ \cK(v,w):=\frac{g_v(\cR(w,v)v,w)}{F(v)^2 g_v(w,w)-g_v(v,w)^2}. \]
We remark that $\cK(v,w)$ depends not only on the plane in $T_xM$ spanned by
$v$ and $w$ (\emph{flag}), but also on the choice of $v$ in it (\emph{flagpole}).

In order to state our theorem, we introduce the condition
\begin{equation}\label{eq:delta}
g_V(V,D^V_W D^V_W V -D^W_W D^W_W V) \ge -\delta F(V)^2 F(W)^2
\end{equation}
for non-vanishing $C^{\infty}$-vector fields $V,W$ and $\delta \ge 0$.
This clearly holds with $\delta=0$ for Berwald spaces.
Therefore $\delta$ measures how the tangent spaces are distorted
as one moves (in $M$) along $W$.
The \emph{injectivity radius} $\inj(z)$ at $z \in M$ is the supremum of $R>0$
such that any unit speed geodesic $\gamma:[0,R) \lra M$ with
$\gamma(0)=z$ contains no cut point of $z$.
We set $B(x,r):=\{ y \in M \,|\, d(x,y)<r \}$ for $x \in M$ and $r>0$.

\begin{theorem}\label{th:dist}
Let $(M,F)$ be a forward complete Finsler manifold and
suppose that $\cK \le k$, $\cS \le S$ and \eqref{eq:delta} hold
for some $k \ge 0$, $S \ge 1$ and $\delta \ge 0$.
Then the function $f(x):=d(x,z)^2/2$ is $K(k,S,\delta,r)$-skew convex
in $\overleftarrow{B}(z,r)$ for all $z \in M$ and $r \in (0,R)$, where we set
\[ K(k,S,\delta,r)
 :=\sqrt{kS^2+\delta}r \cdot \cot(\sqrt{kS^2+\delta}r) \]
and $R:=\min\{ \overleftarrow{\inj}(z),\pi/\sqrt{kS^2+\delta} \}$.
In particular, if $\cK \le 0$, then $f$ is
$(\sqrt{\delta}r \cot(\sqrt{\delta}r))$-skew convex in $\overleftarrow{B}(z,r)$
for $r \in (0,\min\{ \overleftarrow{\inj}(z),\pi/\sqrt{\delta} \})$ regardless $\cS$.
\end{theorem}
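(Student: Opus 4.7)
Given $x, y \in \overleftarrow{B}(z, r)$, I would work with the geodesic variation $\Sigma(s, t) := \sigma_s(t)$, where $\gamma(s) := \Sigma(s, 0)$ is a constant-speed minimal geodesic from $x$ to $y$ with $F(\dot\gamma) \equiv L := d(x,y)$ and $\sigma_s \colon [0,1] \to M$ is the unique minimal geodesic from $\gamma(s)$ to $z$ parametrised so that $F(\dot\sigma_s) \equiv \rho(s) := d(\gamma(s), z)$. The hypothesis $r < \overleftarrow{\inj}(z)$ together with $r < \pi/\sqrt{kS^2 + \delta}$ ensures that $\Sigma$ is a smooth variation containing no cut points. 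Writing $V := \partial_t \Sigma$ and $W := \partial_s \Sigma$, the vector field $W$ is Jacobi along each $\sigma_s$ with $W(s, 0) = \dot\gamma(s)$ and $W(s, 1) = 0$. Combining the first variation formula~\eqref{eq:1vf} with the Legendre-transform definition of the gradient yields $\nabla(-f)(\gamma(s)) = V(s, 0)$, so the required skew-convexity reduces to
\[ \phi(1) - \phi(0) \le -K(k, S, \delta, r)\, L^2, \qquad \phi(s) := g_W(W, V)\big|_{t=0}. \]

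The central computation is the identity
\[ \phi'(s) = \tfrac{1}{2}\, \partial_t F(W)^2 \big|_{t=0}. \]
To derive it, I would first apply~\eqref{eq:cov} along $\gamma$ with reference $\dot\gamma = W|_{t=0}$; the geodesic equation kills one term, and torsion-freeness of the Chern connection combined with $[V, W] = 0$ gives $D^W_W V = D^W_V W$, whence $\phi'(s) = g_W(W, D^W_V W)|_{t=0}$. A second application of~\eqref{eq:cov} along $\sigma_s$ (with velocity $V$) to the function $g_W(W, W) = F(W)^2$, taken with reference $W$, then produces the claimed identity. The problem therefore reduces to the pointwise estimate
\[ \partial_t F(W)^2 \big|_{t=0} \le -2 K(k, S, \delta, r)\, L^2. \]

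To bound this $t$-derivative I would run a Jacobi-field comparison in the spirit of Rauch. The natural auxiliary quantity is the ``Riemannian'' squared norm $h(s, t) := g_V(W, W)(s, t)$; two applications of~\eqref{eq:cov} with reference $V$ together with the Finsler Jacobi equation $D^V_V D^V_V W = -\cR(W, V) V$ and the flag-curvature upper bound $\cK \le k$ yield a differential inequality of the form $\partial_t^2 h \ge -2 k \rho^2 h$, from which the boundary data $h(s, 1) = 0$ and standard comparison with the constant-curvature-$k$ model on $[0, 1]$ deliver a bound on $\partial_t h|_{t=0}$ in terms of $\sqrt{k}\,\rho \cot(\sqrt{k}\,\rho)$. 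Two Finsler corrections must then be absorbed in passing from $h = g_V(W, W)$ to the $F$-quantity $F(W)^2 = g_W(W, W)$: the pointwise discrepancy between the $g_V$- and $F$-norms, controlled by $\cS \le S$ via~\eqref{eq:CS} and manifesting as the factor $S^2$ inside the square root; and the reference-vector distortion in second covariant derivatives (the failure of $D^W_V D^W_V W = D^V_V D^V_V W$ outside Berwald spaces), controlled precisely by~\eqref{eq:delta} and producing the additive $\delta$. Combining these yields
\[ \partial_t F(W)^2 \big|_{t=0} \le -2 \sqrt{kS^2 + \delta}\, \rho \cot\!\big(\sqrt{kS^2 + \delta}\, \rho\big)\, L^2, \]
after which monotonicity of $t \mapsto t \cot t$ on $(0, \pi)$ together with $\rho(s) \le r$ give $\phi'(s) \le -K(k, S, \delta, r) L^2$, and integration in $s \in [0, 1]$ completes the skew-convex inequality. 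The case $\cK \le 0$ follows at once by retaining only the $\delta$-contribution, so that $\cS$ plays no role.

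The main obstacle I anticipate is the careful quantitative bookkeeping of reference vectors throughout the Finsler covariant-derivative calculus, in particular the passage between the $g_V$-based Jacobi comparison (where the classical Rauch argument has its most natural form) and the $F$-based quantity $g_W(W, W) = F(W)^2$ that the skew-convexity formulation demands. The hypothesis~\eqref{eq:delta} is tailor-made to capture precisely this passage, and verifying that its $\delta$-correction combines cleanly with the $S^2$-factor from $\cS$-based norm comparison into the effective curvature $kS^2 + \delta$ is the crux of the argument.
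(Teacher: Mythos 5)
Your opening reduction matches the paper's: parametrise the connecting geodesic $\gamma$ and the radial geodesics $\sigma_s$ to $z$, set $\cT=\partial_t\sigma$, $\cV=\partial_s\sigma$, and use \eqref{eq:cov} plus the symmetry $D^\cV_s\cT=D^\cV_t\cV$ to reduce the skew-convexity to bounding $\partial_t[F(\cV)^2]|_{t=0}$. From there, however, your proposed Jacobi comparison takes a genuinely different and, as sketched, gappy route.

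You propose to run the Rauch-type comparison on the auxiliary quantity $h:=g_V(W,W)$ (in the paper's notation, $g_\cT(\cV,\cV)$), where the reference vector is the radial velocity $V=\cT$, obtaining $\partial_t^2 h\ge -2k\rho^2 h$, and then to ``absorb two Finsler corrections in passing from $h$ to $F(W)^2=g_W(W,W)$.'' This passage is where the gap lies. The smoothness bound $\cS\le S$ controls the \emph{zeroth-order} discrepancy $g_\cT(\cV,\cV)\le S^2 F(\cV)^2$, and \eqref{eq:delta} controls the discrepancy between the \emph{second} covariant derivatives $D^\cV_t D^\cV_t\cV$ and $D^\cT_t D^\cT_t\cV$. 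But what you actually need to compare is $\partial_t h|_{t=0}=2g_\cT(D^\cT_t\cV,\cV)$ with $\partial_t[F(\cV)^2]|_{t=0}=2g_\cV(D^\cV_t\cV,\cV)$, which differ both in the metric and in the \emph{first}-order reference vector of the connection. Neither hypothesis gives a handle on this first-order discrepancy, so the claimed ``absorption'' does not go through as stated.

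The paper avoids introducing $h$ at all: it differentiates $F(\cV)=\sqrt{g_\cV(\cV,\cV)}$ twice in $t$ \emph{with reference $\cV$ throughout} (this is legitimate because $\cV$ appears both as metric subscript and argument, so \eqref{eq:cov} applies with no Cartan correction). The only place a reference swap is needed is in the single term $g_\cV(\cV,D^\cV_t D^\cV_t\cV)$, and \eqref{eq:delta} is invoked exactly there to replace it by $g_\cV(\cV,D^\cT_t D^\cT_t\cV)-\delta F(\cV)^2 F(\cT)^2$; the Jacobi equation (reference $\cT$) and $\cK\le k$ then produce a term bounded, via $\cS\le S$, by $-kS^2 F(\cT)^2 F(\cV)^2$. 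The two corrections thus enter additively in one expression, giving the effective constant $kS^2+\delta$ directly, and the Sturm comparison with $\sin(\sqrt{kS^2+\delta}\,r(1-t))$ finishes. To repair your argument, abandon the auxiliary $h$ and differentiate $F(\cV)^2$ with reference $\cV$ directly, inserting \eqref{eq:delta} only at the second-derivative stage.
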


\begin{proof}
Fix a unit speed minimal geodesic $\gamma:[0,l] \lra \overleftarrow{B}(z,r)$
with $r<R$, and let $\sigma:[0,l] \times [0,1] \lra M$ be the
$C^{\infty}$-variation such that $\sigma_s:=\sigma(s,\cdot)$
is the unique minimal geodesic from $\gamma(s)$ to $z$.
Put $\cT(s,t):=\del_t\sigma(s,t)$ and $\cV(s,t):=\del_s\sigma(s,t)$.
Observe that $\dot{\gamma}(s)=\cV(s,0)$ and $\nabla(-f)(\gamma(s))=\cT(s,0)$.
Hence we need to bound the following:
\begin{equation}\label{eq:d1}
\frac{\del}{\del s}\big[ g_{\cV}\big( \cV(s,0),\cT(s,0) \big) \big]
 =g_{\cV}\big( \cV(s,0),D_s^{\cV} \cT(s,0) \big).
\end{equation}
We used \eqref{eq:cov} and the geodesic equation $D_s^{\cV}\cV(s,0) \equiv 0$.
As $D_s^{\cV} \cT=D_t^{\cV} \cV$ (cf.\ \cite[Exercise~5.2.1]{BCS}),
we deduce from \eqref{eq:cov} that
\begin{equation}\label{eq:d2}
g_{\cV}\big( \cV(s,0),D_s^{\cV} \cT(s,0) \big)
 =\frac{1}{2} \frac{\del}{\del t}[g_{\cV}(\cV,\cV)](s,0)
 =\frac{1}{2} \frac{\del [F(\cV)^2]}{\del t}(s,0).
\end{equation}

Again due to \eqref{eq:cov}, we observe
\begin{align*}
\frac{\del^2[F(\cV)]}{\del t^2}
&= \frac{\del}{\del t} \bigg[ \frac{g_{\cV}(\cV,D^{\cV}_t \cV)}{F(\cV)} \bigg] \\
&= \frac{g_{\cV}(\cV,D^{\cV}_t D^{\cV}_t \cV)
 +g_{\cV}(D^{\cV}_t \cV,D^{\cV}_t \cV)}{F(\cV)}
 -\frac{g_{\cV}(\cV,D^{\cV}_t \cV)^2}{F(\cV)^3} \\
&= \frac{g_{\cV}(\cV,D^{\cV}_t D^{\cV}_t \cV)}{F(\cV)}
 +\frac{F(\cV)^2 g_{\cV}(D^{\cV}_t \cV,D^{\cV}_t \cV)
  -g_{\cV}(\cV,D^{\cV}_t \cV)^2}{F(\cV)^3}.
\end{align*}
The second term is nonnegative by the Cauchy-Schwarz inequality.
Moreover, by the assumption \eqref{eq:delta}, we have
\[ g_{\cV}(\cV,D^{\cV}_t D^{\cV}_t \cV)
 \ge g_{\cV}(\cV,D^{\cT}_t D^{\cT}_t \cV) -\delta F(\cV)^2 F(\cT)^2. \]
Since $\cV(s,\cdot)$ is a Jacobi field, it holds
$D_t^{\cT}D_t^{\cT} \cV =-\cR(\cV,\cT)\cT$ and hence
\begin{align*}
\frac{\del^2[F(\cV)]}{\del t^2}
&\ge -\cK(\cT,\cV) \frac{F(\cT)^2 g_{\cT}(\cV,\cV) -g_{\cT}(\cT,\cV)^2}{F(\cV)}
 -\delta F(\cV) F(\cT)^2 \\
&\ge -k \frac{F(\cT)^2 g_{\cT}(\cV,\cV) -g_{\cT}(\cT,\cV)^2}{F(\cV)}
 -\delta F(\cV) F(\cT)^2.
\end{align*}
As $k \ge 0$, it follows from $\cS \le S$ that (recall \eqref{eq:CS})
\[ -k \{F(\cT)^2 g_{\cT}(\cV,\cV) -g_{\cT}(\cT,\cV)^2 \}
 \ge -k F(\cT)^2 g_{\cT}(\cV,\cV) \ge -k S^2 F(\cV)^2 F(\cT)^2. \]
Hence we obtain, together with $F(\cT) \le r$,
\[ \frac{\del^2[F(\cV)]}{\del t^2} \ge -(kS^2+\delta) r^2 F(\cV). \]
The above inequality shows that the function
\[ \frac{\del[F(\cV)]}{\del t} \sin\big( \sqrt{kS^2+\delta}r(1-t) \big)
 -F(\cV)\frac{\del}{\del t}\big[ \sin\big( \sqrt{kS^2+\delta}r (1-t) \big) \big] \]
is non-decreasing in $t \in [0,1]$, so that it is nonpositive for all $t$.
Thus we have
\begin{equation}\label{eq:d3}
\frac{1}{2} \frac{\del[F(\cV)^2]}{\del t}
 =F(\cV) \frac{\del [F(\cV)]}{\del t}
 \le -\sqrt{kS^2+\delta}r \cdot \cot\big( \sqrt{kS^2+\delta}r(1-t) \big) F(\cV)^2.
\end{equation}

Combining \eqref{eq:d1}, \eqref{eq:d2}, \eqref{eq:d3} and
$F(\cV(s,0))=F(\dot{\gamma}(s))=1$, we conclude
\[ \frac{\del}{\del s}\big[ g_{\cV}\big( \cV(s,0),\cT(s,0) \big) \big]
 \le -\sqrt{kS^2+\delta}r \cdot \cot(\sqrt{kS^2+\delta}r). \]
This completes the proof.
$\qedd$
\end{proof}

Interestingly enough, what appeared in Theorem~\ref{th:dist} is not
the $2$-uniform convexity constant $\cC$, but the smoothness constant $\cS$.
Compare this with the usual convexity in \cite[Theorem~5.1]{Ouni}.
We finally state the Berwald case separately.

\begin{corollary}\label{cr:dist}
Let $(M,F)$ be forward complete and of Berwald type
and suppose that $\cK \le k$ and $\cS \le S$ hold for some $k \ge 0$ and $S \ge 1$.
Then the function $f(x):=d(x,z)^2/2$ is $(\sqrt{k}Sr \cot(\sqrt{k}Sr))$-skew convex
in $\overleftarrow{B}(z,r)$ for all $z \in M$,
$r \in (0,\min\{ \overleftarrow{\inj}(z),\pi/\sqrt{k}S\})$.
In particular, if $\cK \le 0$, then $f$ is $1$-skew convex in
$\overleftarrow{B}(z,\overleftarrow{\inj}(z))$ regardless $\cS$.
\end{corollary}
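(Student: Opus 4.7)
The plan is to derive Corollary~\ref{cr:dist} as a direct specialization of Theorem~\ref{th:dist}, once one verifies that a Berwald manifold automatically satisfies the distortion condition \eqref{eq:delta} with $\delta=0$.

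First I would check that on a Berwald manifold the covariant derivative $D^w_v X$ is independent of the reference vector $w$. Its defining formula is built entirely from the Christoffel symbols $\Gamma^i_{jk}(w)$, and the Berwald hypothesis is precisely that these depend only on the base point. Writing $D_v X$ for the resulting (reference-free) affine connection, both iterated derivatives appearing in \eqref{eq:delta} collapse to the same object $D_W D_W V$. Hence $g_V(V,\,D^V_W D^V_W V - D^W_W D^W_W V)\equiv 0$, so \eqref{eq:delta} holds with $\delta=0$.

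Next I would plug this value into Theorem~\ref{th:dist}. With $\delta=0$ the skew convexity constant becomes
\[ K(k,S,0,r) = \sqrt{kS^2}\,r\cot(\sqrt{kS^2}\,r) = \sqrt{k}\,S\,r\cot(\sqrt{k}\,S\,r), \]
and the admissible radius reduces to $\min\{\overleftarrow{\inj}(z),\pi/(\sqrt{k}\,S)\}$, which is the first assertion.

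For the sharpened $\cK\le 0$ statement, I would revisit the key differential inequality inside the proof of Theorem~\ref{th:dist}: with $k=0=\delta$ it collapses to $\del^2[F(\cV)]/\del t^2 \ge 0$, so $F(\cV(s,\cdot))$ is convex on $[0,1]$ without any use of the bound $\cS\le S$. The boundary values are $F(\cV(s,0))=F(\dot{\gamma}(s))=1$ (unit-speed geodesic) and $F(\cV(s,1))=0$ (since $\sigma_s(1)\equiv z$). Convexity then forces $\del[F(\cV)]/\del t|_{t=0}\le -1$, and feeding this into the identities (A.3)--(A.4) of that proof yields that $f$ is $1$-skew convex on $\overleftarrow{B}(z,\overleftarrow{\inj}(z))$ regardless of $\cS$. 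Equivalently, one may let $k\downarrow 0$ in the general formula and invoke $\lim_{x\downarrow 0} x\cot x=1$.

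I expect no serious obstacle: the corollary is essentially the observation that Berwald type makes the reference vector in every covariant derivative irrelevant, which kills the distortion parameter. The only subtle point is the $\cK\le 0$ refinement, where one must notice that the estimate $-k\{F(\cT)^2 g_{\cT}(\cV,\cV)-g_{\cT}(\cT,\cV)^2\}\ge -kS^2 F(\cV)^2 F(\cT)^2$ in the proof of Theorem~\ref{th:dist}---the unique place where $\cS$ enters---becomes trivially sharp when $k=0$, so $\cS$ drops out of the final constant.
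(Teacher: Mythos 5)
Your proposal is correct and follows exactly the route the paper intends: the corollary is a direct specialization of Theorem~\ref{th:dist} once one observes that the Berwald hypothesis makes the Christoffel symbols independent of the reference vector, so the two iterated covariant derivatives in \eqref{eq:delta} coincide and $\delta=0$ works. Both your two derivations of the $\cK\le 0$ refinement (the direct convexity argument for $F(\cV(s,\cdot))$ and the $k\downarrow 0$ limit $x\cot x\to 1$) are sound; the only cosmetic slip is the reference to ``(A.3)--(A.4)'' where you mean \eqref{eq:d1}--\eqref{eq:d3}.
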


This recovers the $1$-skew convexity of $f(x)=\|\!-x\|^2/2$ on Minkowski spaces
in Section~\ref{sc:dist}.


\end{document}